\documentclass[11pt]{article}

\usepackage[margin=1in]{geometry}
\usepackage{amsmath,amssymb,amsthm,amsfonts}
\usepackage{graphicx}
\usepackage{epstopdf}
\usepackage{xcolor}
\usepackage{algorithm}
\usepackage{algorithmic}
\usepackage{booktabs}
\usepackage{microtype}
\usepackage[numbers,sort&compress]{natbib}
\usepackage[hidelinks]{hyperref}

\DeclareGraphicsExtensions{.pdf,.eps,.png,.jpg}

\theoremstyle{plain}
\newtheorem{theorem}{Theorem}[section]
\newtheorem{lemma}[theorem]{Lemma}

\newtheorem{definition}[theorem]{Definition}
\newtheorem{assumption}[theorem]{Assumption}
\newtheorem{example}[theorem]{Example}

\theoremstyle{remark}
\newtheorem{remark}[theorem]{Remark}

\usepackage[capitalize,nameinlink]{cleveref}

\crefname{assumption}{Assumption}{Assumptions}
\crefname{example}{Example}{Examples}
\crefname{property}{Property}{Properties}
\crefname{remark}{Remark}{Remarks}

\newcommand{\be}{\begin{equation}}
\newcommand{\ee}{\end{equation}}
\newcommand{\tr}{\operatorname{tr}}
\newcommand{\bm}{\boldsymbol}

\DeclareMathOperator*{\argmin}{\arg\min}

\def\calA{\mathcal{A}}
\def\calC{\mathcal{C}}
\def\calH{\mathcal{H}}

\def\calO{\mathcal{O}}

\def\calS{\mathcal{S}}

\def\spec{S}
\def\simp{\Delta}

\def\Conv{\mbox{Conv}}
\def\dist{{\rm dist}}
\def\diag{{\rm diag}}

\def\sgn{{\rm sgn}}
\def\SO{{\rm SO}}

\def\Xstar{\mathcal{X}^*}

\def\bfone{{\bm 1}}
\def\bfa{{\bm a}}
\def\bfb{{\bm b}}

\def\bfe{{\bm e}}

\def\bfv{{\bm v}}
\def\bfw{{\bm w}}
\def\bfx{{\bm x}}
\def\bfy{{\bm y}}

\def\bflambda{\bm{\lambda}}

\def\tileta{{\tilde{\eta}}}
\def\tilS{{\tilde{S}}}

\title{Rank-Adaptive and Linearly Convergent Frank--Wolfe Method over Spectrahedron via Nonconvex Oracle}

\author{%
Houduo Qi$^{\dagger}$ \qquad
Haoning Wang$^{\ddagger}$ \qquad
Liping Zhang$^{\ddagger}$\\[0.85em]
\normalsize
$^{\dagger}$Department of Data Science and Artificial Intelligence, and Department of Applied Mathematics,\\
The Hong Kong Polytechnic University, Hong Kong\\
\texttt{houduo.qi@polyu.edu.hk}\\[0.45em]
$^{\ddagger}$Department of Mathematical Sciences, Tsinghua University, Beijing 100084, China\\
\texttt{whn22@mails.tsinghua.edu.cn},\quad
\texttt{lipingzhang@tsinghua.edu.cn}%
}

\date{}

\hypersetup{
  pdftitle={Rank-Adaptive and Linearly Convergent Frank--Wolfe Method over Spectrahedron via Nonconvex Oracle},
  pdfauthor={H. Qi, H. Wang, and L. Zhang}
}

\begin{document}

\maketitle

\begin{abstract}
For Frank--Wolfe (FW) methods for convex optimization over the spectrahedron, it remains open whether a block-update variant can be linearly convergent when the update rank never exceeds the (unknown) optimal rank~$r^*$ at each iteration.
Existing block and spectral FW methods require an update rank at least~$r^*$ (typically prior knowledge of~$r^*$) to obtain a linear rate.
This paper develops a rank-adaptive FW method whose update rank never exceeds $r^*$
 %satisfy $\widehat{r}_t\le r_t\le r^*$ 
at every iteration and which converges linearly after a finite burn-in under quadratic growth and strict complementarity, the two conditions commonly used in spectral FW analyses.
The method is built on two designs.
The first is a nonconvex spectral oracle, motivated by the geometric connection between the simplex and the spectrahedron; it yields a thresholding rank~$r_t$ of the current iterate and a closed-form low-rank solution.
Computing~$r_t$ exactly, however, requires a full eigendecomposition.
The second introduces the efficient rank~$\widehat{r}_t$ of the current iterate, a cheap surrogate that inherits the optimality properties of the spectral oracle.
The algorithm switches between the thresholding rank and the efficient rank so that the actual FW update uses~$\widehat{r}_t$, keeps the per-iteration cost comparable to standard FW, and eventually identifies~$r^*$.
These results close the gap between low-rank efficiency and fast convergence for Frank--Wolfe methods over the spectrahedron.
Numerical experiments demonstrate the advantage of the proposed method.
\end{abstract}

\noindent\textbf{Keywords.}
Frank--Wolfe algorithm, conditional gradient methods, projection-free methods, matrix optimization, linear convergence

\vspace{0.5em}
\noindent\textbf{MSC codes.}
90C06, 90C25, 65K05

\section{Introduction}

We study efficient first-order methods for solving the following convex problem:
\begin{equation}\label{Eq: Problem}
    \min_{X \in \spec_n} f(X) \quad \mbox{s.t.} \ \  
    X \in \spec_n := \{ X \in \mathbb{S}^n \mid \tr(X) = 1,\, X \succeq 0 \},
\end{equation}
where $\spec_n$ denotes the \emph{spectrahedron} defined over the space $\mathbb{S}^n$ of $n \times n$ real symmetric matrices. 
Here, $X \succeq 0$ indicates that $X$ is positive semi-definite. 
The objective function $f : \mathbb{S}^n \to \mathbb{R}$ is assumed to be convex and $L$–smooth.
% 介绍这类问题的重要性
Problem~\eqref{Eq: Problem} serves as a fundamental formulation underlying a broad class of low-rank matrix recovery tasks.
%\footnote{Minimizing a convex function over a nuclear-norm ball can be efficiently reduced to the same problem over the spectrahedron; see \cite{jaggi2010simple} for details.}  
Notable examples include matrix completion~\cite{jaggi2010simple, candes2012exact}, distance metric learning~\cite{xing2002distance, ying2012distance}, matrix sensing~\cite{yurtsever2017sketchy,recht2010guaranteed}, and blind deconvolution~\cite{ahmed2013blind}, among many others~\cite{dudik2012lifted, zhang2012accelerated, braun2025conditional}.
This paper studies Frank--Wolfe (FW) methods for~\eqref{Eq: Problem} and develops a rank-adaptive variant that never exceeds the optimal rank~$r^*$, needs no a priori estimate of it, and is linearly convergent under quadratic growth and strict complementarity.

% 介绍为什么研究FW类算法
\subsection{Literature review}

We focus on methods built upon the classical FW framework~\cite{frank1956algorithm, levitin1966constrained}.
FW avoids the $O(n^3)$ projection onto the spectrahedron: its linear minimization oracle (LMO) needs only a leading eigenvector of the gradient, which iterative eigensolvers typically obtain at a cost that scales almost linearly with the number of nonzeros.
The updates remain low-rank, so quantities such as $f(X)$ or $\nabla f(X)$ can be maintained incrementally---a substantial saving when the objective involves $g(\calA(X))$, $g(\calA(X^{-1}))$, or a $\log\det$ term~\cite{yurtsever2017sketchy,danon2022frank}.
Finally, FW needs little parameter tuning and naturally admits line search or backtracking~\cite{pedregosa2020linearly}.

A well-known limitation of the classical FW method is its $O(1/\epsilon)$ worst-case convergence rate.  
This rate does not improve even under standard curvature assumptions such as strong convexity or quadratic growth~\cite{jaggi2013revisiting, lan2013complexity}, whereas projection-based first-order methods achieve a linear convergence rate of order $O(\log(1/\epsilon))$. 
To close this gap, a growing line of recent work has explored how to accelerate FW-type algorithms on the spectrahedron under additional structural assumptions, including quadratic growth and 
strict complementarity.
In particular, the rank of the block-update appears to play a central role in
achieving the acceleration. We briefly comment on two groups of research
along this line. 

{\bf (A) Rank-1 update.}
One notable development is the Regularized FW method of~\cite{garber2016faster}, which relies solely on leading-eigenvector oracles and, under strong convexity, attains an $O(1/\sqrt{\epsilon})$ rate in expectation---the first acceleration of FW over the spectrahedron, though short of linear convergence.
When the optimum is rank-one and satisfies strict complementarity,~\cite{garber2023linear} further shows that standard FW with line search is linearly convergent after a finite burn-in; the argument does not extend to a higher-rank optimum.
A subsequent line~\cite{garber2025linearly,garber2026linearconvergencefrankwolfetypemethod} combines Frank--Wolfe steps with specialized away and pairwise updates, still using only extreme-eigenvector oracles.
The randomized SAP-FW method of~\cite{garber2025linearly} attains linear convergence in expectation after a burn-in under quadratic growth and strict complementarity.
It requires the smoothness constant, performs three extreme-eigenvector computations per iteration, and maintains a thin factorization (or the pseudoinverse) of $X_t$ to implement the away and pairwise steps; a stable implementation is nontrivial.
The follow-up~\cite{garber2026linearconvergencefrankwolfetypemethod} relaxes the strict complementarity requirement of SAP-FW and, within the same away--pairwise framework, proposes a deterministic linearly convergent variant.
A limitation of the resulting guarantee is that it can become arbitrarily slow when the linear term of the objective grows.
On the quadratic $f(X)=\frac12\lVert X-t\,\mathrm{diag}\{1,-1\}\rVert_F^2$ as $t\to\infty$, the complexity guarantees of classical FW and of the linearly convergent variants reviewed above do not deteriorate with~$t$, whereas the linear rate proved in~\cite{garber2026linearconvergencefrankwolfetypemethod} tends to zero.

{\bf (B) Rank-r update}.
Another line of work achieves linear convergence by computing the top-$r$ eigen-components at each iteration, rather than relying solely on rank-one updates. 
This approach can be viewed as a middle ground between projection-based methods and classical FW algorithms.  
The Block-FW method introduced in~\cite{allen2017linear} attains linear convergence under quadratic growth and strict complementarity assumptions, provided that the block size satisfies $r \geq r^*$, where $r^*$ denotes the rank of an optimal solution. 
The numerical benefits by the block-updates with $r>1$ are also well explained and justified in \cite{ding2020k}. 
However, when $r < r^*$, the method lacks convergence guarantees, which limits its robustness in practice.  
To overcome this limitation, \cite{ding2020spectral} proposed the Spectral Frank-Wolfe method, which matches the sublinear convergence rate of standard FW when $r < r^*$ and achieves linear convergence once $r$ exceeds the optimal rank, under the same assumptions.  
A common drawback of both approaches is their reliance on a tight estimate of $r^*$, which is typically unavailable in applications; choosing $r$ larger than necessary leads to unnecessarily expensive partial SVD computations.
Consequently, it remains open whether a block-update FW method can attain a linear rate when the update rank never exceeds the unknown~$r^*$.
The analyses in~\cite{allen2017linear, ding2020spectral} appear to leave little room for improvement within their current update rules when $r<r^*$; new oracles are needed to tackle this case.
Table~\ref{table: alg-compare} summarizes the above FW-type methods in terms of the eigen-decomposition rank, required parameters, burn-in requirements, and convergence rates.

\begin{table}[htbp]
\centering
\footnotesize
\caption{Comparison of FW-type methods for the spectrahedron.
``EV rank'' is the number of eigen-components computed per iteration;
``Rate'' is the convergence rate after any burn-in phase.
Entries marked ``(E)'' hold in expectation.}
\label{table: alg-compare}
\begin{tabular}{@{}lcccc@{}}
\toprule
Algorithm & EV rank & Parameters & Burn-in & Rate \\
\midrule
Frank--Wolfe~\cite{frank1956algorithm}
  & $1$ & --- & No & $1/t$ \\
Regularized-FW~\cite{garber2016faster}
  & $1$ & $L$ & No & $1/t^2$ (E) \\
SAP-FW~\cite{garber2025linearly}
  & $1$ & $L$ & Yes & linear (E) \\
Block-FW~\cite{allen2017linear}
  & $r\,(\ge r^*)$ & $r\ge r^*,\,\mu,\,L$ & No & linear \\
Spectral-FW~\cite{ding2020spectral}
  & $r\,(\ge r^*)$ & $r\ge r^*$ & Yes & linear \\
This paper
  & $\widehat{r}_t\,(\le r^*)$ & $\mu$ & Yes & linear \\
\bottomrule
\end{tabular}
\end{table}

%%%%
\subsection{New proposal and main contributions}

As already indicated in Table~\ref{table: alg-compare}, our proposal addresses the bottleneck of achieving a linear rate with an update rank not exceeding~$r^*$.
The method is built on two designs.
The first is a nonconvex \emph{Spectral Oracle} based on the spectral simplex ball; it produces a \emph{thresholding rank}~$r_t$ of the current iterate and a closed-form low-rank solution.
Computing~$r_t$ exactly, however, requires a full eigenvalue decomposition.
The second design introduces an \emph{efficient rank}~$\widehat{r}_t$, a cheap surrogate that inherits the optimality properties of the Spectral Oracle.
The algorithm switches between the two ranks so that the actual FW update uses~$\widehat{r}_t$ and satisfies $\widehat{r}_t\le r_t\le r^*$ at every iteration.
This leads to a \emph{linearly convergent} and \emph{rank-adaptive} Frank--Wolfe method for~\eqref{Eq: Problem} that requires no prior knowledge of~$r^*$ under quadratic growth and strict complementarity.
We postpone all the technical developments to the remaining sections and
summarize the main contributions below.

\begin{enumerate}
    \item \textbf{Spectral ball and nonconvex Spectral Oracle.}
    Motivated by the simplex ball framework of~\cite{wang2025simplex}, which yields linear convergence over polytopes via a novel oracle, we lift the underlying geometry to the spectrahedron.
    The key observation is that the eigenvalue vector of any $X\in\spec_n$ lies in the unit simplex $\simp_n=\{x\in\mathbb{R}^n:x_i\ge 0,\,\sum_{i=1}^n x_i=1\}$.
    We therefore define a \emph{spectral ball} as the set of matrices whose eigenvalues belong to a prescribed simplex ball, and introduce a \emph{Spectral Oracle} by restricting the Frank--Wolfe linear minimization step to the intersection of this ball with spectrahedron.
    Despite the nonconvexity of this constraint set, the oracle admits a closed-form low-rank solution whose rank is the \emph{thresholding rank} of the current iterate and whose range is a principal eigenspace of the gradient.

    \item \textbf{Rank adaptivity without knowing $r^*$.}
    Embedding the Spectral Oracle into the Frank--Wolfe framework yields a rank-adaptive scheme (ERSO-FW) in which the number of eigen-components computed at iteration~$t$ is the \emph{efficient rank}~$\widehat{r}_t$, a cheap surrogate of the thresholding rank~$r_t$.
    We prove that $\widehat{r}_t\le r_t\le r^*$ for all~$t$ and that, after finitely many iterations, both ranks stabilize exactly at the optimal rank~$r^*$.
    Consequently, the method needs no a priori estimate of $r^*$ and can recover it by monitoring the sequence $\{\widehat{r}_t\}$.

    \item \textbf{Linear convergence with low per-iteration cost.}
    Building on this rank-adaptive oracle, we develop a fully corrective variant that attains linear convergence after a finite burn-in phase under quadratic growth and strict complementarity.
    Both the burn-in length and the linear rate depend only on the smoothness constant~$L$, the quadratic-growth constant~$\mu$, and the strict-complementarity quantities~$\delta$ and~$\lambda_{r^*}$ (see Section~\ref{Subsection-Notation-Assumpt}), and are independent of the ambient dimension~$n$; the linear rate is moreover independent of the optimal rank~$r^*$, whereas the burn-in length involves an additional additive~$r^*$ term caused by the use of the efficient rank.
    Aside from computing the top-$\widehat{r}_t$ eigen-components, implementing the Spectral Oracle requires only $O(r^*n^2)$ additional work per iteration, so the cost remains comparable to standard Frank--Wolfe methods when $r^*\ll n$.
    Numerical experiments on quadratic sensing, matrix completion, and polynomial neural network training support the theory and demonstrate competitive, robust performance.
\end{enumerate}

\subsection{Organization}

The remainder of this paper is organized as follows. 
Section~\ref{section-preliminaries} introduces the notation and assumptions used throughout the paper, and reviews the simplex ball and its key properties. 
Section~\ref{section-spectral-ball-oracle} presents the spectral ball, studies its structural properties, and introduces the Spectral Oracle along with an efficient implementation. 
Section~\ref{section-main-results} develops our main algorithms, including a natural Frank--Wolfe variant based on the Efficient-Rank Spectral Oracle and a fully corrective variant that achieves linear convergence. 
Section~\ref{section-numerical-tests} reports numerical experiments to validate the effectiveness of the proposed methods.
We conclude in Section~\ref{sec:conclusions}.

\section{Preliminaries}\label{section-preliminaries}

\subsection{Notation and assumptions}\label{Subsection-Notation-Assumpt}
We use lower-case letters, bold lower-case letters, and capital letters to denote scalars, vectors, and matrices, respectively (e.g., $x$, $\bfx$, and $X$). 
The Euclidean norm for vectors is denoted by $\|\cdot\|$, and the standard inner product by $\langle \cdot, \cdot \rangle$. 
For a vector $\bfx \in \mathbb{R}^n$, its $i$-th component is denoted by $x_i$ or $x(i)$. 
The vector $\bfone_n$ is the all-ones vector, and $\bfe_i$ is the $i$-th standard basis vector in $\mathbb{R}^n$, with a $1$ in the $i$-th position and $0$ elsewhere. 
For vectors $\bfx, \bfy \in \mathbb{R}^n$, $\max\{\bfx, \bfy\}$ denotes the component-wise maximum, forming a new vector with entries $\max\{x_i, y_i\}$.
When $\bfy=0$, we denote $\max\{ \bfx, 0\} = \bfx_+$. 
For $\bfx \in \mathbb{R}$, $\sgn(\bfx)$ is the sign vector of $\bfx$
and $\| \bfx\|_0$ is the zero norm that counts the nonzero elements in $\bfx$.
The Hadamard product $\bfx \circ \bfy$ is the componentwise product vector
of $\bfx$ and $\bfy$.
The vector $\min\{\bfx, \bfy\}$ is defined similarly.
For a vector $\bfx \in \mathbb{R}^n$, a subset $C \subseteq \mathbb{R}^n$, and $\tau > 0$, we define the distance $\dist(\bfx, C) := \min_{\bfy \in C} \lVert \bfx - \bfy \rVert$, the Minkowski sum $\bfx + C := \{ \bfx + \bfy \mid \bfy \in C \}$, and the scaled set $\tau C := \{ \tau \bfy \mid \bfy \in C \}$. 
The convex hull of a set $\mathcal{V}$ is denoted by $\Conv\{\mathcal{V}\}$. 
For any positive integer $n$, the set $[n] := \{1, \dots, n\}$.

For matrices, the spectral norm and Frobenius norm are denoted by $\|\cdot\|$ and $\|\cdot\|_F$, respectively. 
The inner product $\langle \cdot, \cdot \rangle$ on symmetric matrices is the trace inner product. 
For a matrix $A \in \mathbb{S}^n$, its eigenvalues in non-ascending order are denoted by $\lambda_1(A) \ge \dots \ge \lambda_n(A)$, and the eigenvalue vector by $\bflambda(A) = (\lambda_1(A), \dots, \lambda_n(A))^T$.
For convenience, we let $\lambda_{\max}(A) = \lambda_1(A)$ and
$\lambda_{\min}(A) = \lambda_n(A)$.
We further denote by $\bflambda_{\uparrow}(A) := (\lambda_n(A), \dots, \lambda_1(A))^T$ the same vector arranged in non-decreasing order.
We denote by $\mbox{EV}_{k}(A)$ eigenvectors of A that corresponds to the $k$-smallest (signed) eigenvalues of $A$.
For a vector $\bflambda \in \mathbb{R}^n$, $\diag(\bflambda)$ denotes the diagonal matrix with $\bflambda$ on its diagonal. 
The $n$-dimensional identity matrix is denoted by $I_n$, and the orthogonal group by $\mathcal{O}_n := \{ P \in \mathbb{R}^{n \times n} : P P^T = I_n \}$. 
For any integer $r>0$, let $S_r := \{ S \in \calS^r \ | \ \tr(S) = 1, \ S \succeq 0\}$.

\subsection{Basic assumptions}

We let $\Xstar \subseteq S_n$ denote the optimal solution set of
% $\min_{X \in S_n} f(X)$, 
\eqref{Eq: Problem}
with $f^*$ denoting the optimal value. 
We assume that $f$ is convex and $L$-smooth over $S_n$, i.e., for all $X, Y \in S_n$,
\[
\lVert \nabla f(X) - \nabla f(Y) \rVert_F \leq L \lVert X - Y \rVert_F.
\]
This implies the standard inequality (see, e.g., \cite[Thm 2.1.5]{nesterov2013introductory}):
\[
f(Y) \leq f(X) + \langle \nabla f(X), Y - X \rangle + \frac{L}{2} \lVert X - Y \rVert_F^2, \quad \forall X, Y \in S_n.
\]

We now introduce our two main assumptions, quadratic growth and strict complementarity, which hold throughout this work. Quadratic growth is a standard condition in the literature on linear convergence rates for first-order methods \cite{necoara2019linear,ding2020spectral}.

\begin{assumption}[Quadratic Growth (QG)]
    There exists $\mu > 0$ such that for all $X \in S_n$,
    \begin{equation}
    \dist^2(X, \Xstar) \leq \frac{2}{\mu} (f(X) - f^*).
    \end{equation}
\end{assumption}

Strict complementarity is key to linear rates for Frank--Wolfe methods; on the spectrahedron it amounts to a positive eigengap of $\nabla f$ at optima~\cite{ding2020spectral,garber2023linear,garber2025linearly}.

\begin{assumption}[Strict Complementarity]\label{Assum-strict-comp}
    There exists an integer $r^* \in [n]$ such that for all $X^* \in \Xstar$, $\mbox{rank}(X^*) = r^*$.
    Let $\lambda_{r^*}:=\min_{X\in \Xstar} \lambda_{r^*}(X^*)>0$ denote the smallest nonzero eigenvalue among all optimal solutions.
    If $r^* < n$, there exists $\delta > 0$ such that
    \begin{equation}
    \lambda_{n-r^*}(\nabla f(X^*)) - \lambda_{n-r^*+1}(\nabla f(X^*)) = \delta, \quad \forall X^* \in \Xstar.
    \end{equation}
\end{assumption}
Note that Assumption~\ref{Assum-strict-comp} implies that $\Xstar$ lies on an $r^*$-dimensional face of $S_n$.
For any $A \in \calS^n$ and any integer $r \in [n]$, we call
\[
 \calC_r(A) := \left\{  
  V_r S V_r^\top \ | \ S \in S_r
 \right\}
\]
the spectral $r$th set of $A$, where $V_r \in \mathbb{R}^{n \times r}$ consists of the orthonormal
eigenvectors corresponding to the $r$ smallest eigenvalues of $A$.
The following result plays an important role in our analysis.

\begin{lemma}\cite[Lemma~5]{ding2020spectral}
\label{Lemma-Ding}
Given $Y \in \calS^n$ which satisfies $\lambda_{n-r}(Y) - \lambda_{n-r+1}(Y) \ge \delta$ for some $\delta>0$,
then for any $X \in S_n$, there is some $W \in \calC_r(Y)$ such that
\[
 \langle X-W, \; Y\rangle \ge \frac{\delta}2 \| X - W\|^2_F .
\]
\end{lemma}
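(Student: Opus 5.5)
Write $Y = V\Lambda V^\top$ with $\Lambda=\diag(\lambda_1(Y),\dots,\lambda_n(Y))$ in non-ascending order. Split the columns of $V=[V_\perp, V_r]$, where $V_r\in\mathbb{R}^{n\times r}$ spans the eigenspace of the $r$ smallest eigenvalues and $V_\perp$ spans its orthogonal complement. Set $\lambda_{\rm up}:=\lambda_{n-r}(Y)$ and $\lambda_{\rm low}:=\lambda_{n-r+1}(Y)$, so that $\lambda_{\rm up}-\lambda_{\rm low}\ge\delta$.

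Given $X\in S_n$, use the block decomposition
\[
X=V_rAV_r^\top+V_rBV_\perp^\top+V_\perp B^\top V_r^\top+V_\perp CV_\perp^\top,
\]
with $A\succeq 0$ and $C\succeq 0$. Write $\tau:=\tr(C)=1-\tr(A)\in[0,1]$.

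\textbf{Choice of $W$.} If $\tr(A)>0$, take $W:=V_rAV_r^\top/\tr(A)$. If $A=0$, then $B=0$ by positive semidefiniteness, and any $W\in\calC_r(Y)$ may be used. In either case $W\in\calC_r(Y)$.

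\textbf{Lower bound on the left-hand side.}
\[
\langle X-W,Y\rangle=\langle C,\Lambda_\perp\rangle+\langle A,\Lambda_r\rangle-\frac{\langle A,\Lambda_r\rangle}{\tr(A)} .
\]
Here $\langle C,\Lambda_\perp\rangle\ge\tau\lambda_{\rm up}$. Moreover,
\[
\langle A,\Lambda_r\rangle\Bigl(1-\frac1{\tr(A)}\Bigr)=-\frac{\tau}{1-\tau}\langle A,\Lambda_r\rangle\ge-\tau\lambda_{\rm low},
\]
because $\langle A,\Lambda_r\rangle\le(1-\tau)\lambda_{\rm low}$. Hence
\[
\langle X-W,Y\rangle\ge\tau(\lambda_{\rm up}-\lambda_{\rm low})\ge\delta\tau .
\]

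\textbf{Upper bound on $\|X-W\|_F^2$.} By orthogonal invariance,
\[
\|X-W\|_F^2=\Bigl(\frac{\tau}{1-\tau}\Bigr)^2\|A\|_F^2+2\|B\|_F^2+\|C\|_F^2 .
\]
I bound each term by a multiple of $\tau$:
\begin{itemize}
\item $\|C\|_F^2\le\tr(C)^2=\tau^2\le\tau$.
\item $\|A\|_F\le\tr(A)=1-\tau$, so the first term is at most $\tau^2\le\tau$.
\item For the off-diagonal block, positive semidefiniteness of $X$ gives $|B_{ij}|^2\le A_{ii}C_{jj}$. Summing over $i,j$ yields $\|B\|_F^2\le\tr(A)\tr(C)=(1-\tau)\tau\le\tau$.
\end{itemize}
Altogether $\|X-W\|_F^2\le 2\tau^2+2\tau(1-\tau)=2\tau$.

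\textbf{Combining.} The two bounds give
\[
\langle X-W,Y\rangle\ge\delta\tau\ge\frac{\delta}{2}\|X-W\|_F^2 ,
\]
which is the claim. In the case $A=0$ we have $\tau=1$ and $B=0$; then $\langle X-W,Y\rangle\ge\lambda_{\rm up}-\lambda_{\rm low}\ge\delta$ and $\|X-W\|_F^2\le\|X\|_F^2+\|W\|_F^2\le 2$, so the claim holds there as well.

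\textbf{Main obstacle.} The delicate step is the off-diagonal block. It is controlled by the Cauchy--Schwarz inequality $\|B\|_F^2\le\tr(A)\tr(C)$, which is what makes the quadratic term linear in $\tau$. The remaining care is keeping the constants so that the final factor is exactly $\delta/2$.
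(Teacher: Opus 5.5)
Your proof is correct and complete. The block decomposition of $X$ in the eigenbasis of $Y$, the choice $W=V_rAV_r^\top/\tr(A)$, the lower bound $\langle X-W,Y\rangle\ge\tau(\lambda_{n-r}(Y)-\lambda_{n-r+1}(Y))\ge\delta\tau$, and the upper bound $\|X-W\|_F^2\le\tau^2+2\tau(1-\tau)+\tau^2=2\tau$ (via $\|A\|_F\le\tr(A)$, $\|C\|_F\le\tr(C)$ and $\|B\|_F^2\le\tr(A)\tr(C)$) all check out, and so does the degenerate case $A=0$.

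The paper gives no proof of its own; it imports the lemma from \cite[Lemma~5]{ding2020spectral}. Your argument is a sound self-contained proof along the standard lines for this result.

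One cosmetic point: the parenthetical ``$\le\tau$'' bounds in your bullet list are not what you actually use. Summing those would give $4\tau$ and only $\delta/4$. It is the sharper bounds you sum at the end that give exactly $2\tau$ and hence the constant $\delta/2$.
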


\subsection{Simplex ball} \label{subsection-simplex-ball}

As discussed earlier, our approach is inspired by the Simplex FW method \cite{wang2025simplex}, 
which achieves linear convergence over the simplex via a novel oracle design. 
For completeness, we briefly review the definition of the simplex ball and its key properties, 
which will serve as the foundation for our extension to the spectrahedral setting.

\begin{definition}[Simplex ball]\label{Def: simplex_ball}
	Let
	$
	  \Delta_0 := \Delta_n - \frac 1n \bfone_n .
	$    
	For any $\bfx\in\mathbb{R}^n$ and $d>0$, we define $\Delta(\bfx, d)$ as the simplex ball of radius $d$ centered at $\bfx$ by
    \begin{equation}\label{SimplexBall-New}
    	\Delta(\bfx, d) := \bfx + (nd) \Delta_0 = \Big\{ (\bfx - d\bfone_n) + nd\bflambda \mid \bflambda \in \Delta_n  \Big\}.
       % \Delta(\bfx,d):=\{\bfx+d\bfr\vert \bfr\in\text{conv}\{n\bfe_i-\bfone_n:i\in [n] \} \}.
    \end{equation}
\end{definition}
The following properties of the simplex ball are crucial to our development.

\begin{lemma}\cite[Lemma~1]{wang2025simplex}
\label{Lemma: Simplex ball}
    Given $\bfx\in \Delta_n$ and $d>0$, we have the following statements.
    \begin{enumerate}
        \item[{\rm (1)}] The unit simplex is a simplex ball, i.e., $\Delta_n = \Delta(\frac{1}{n}\bfone_n, \frac{1}{n})$.
%        Moreover, we have
%        \begin{equation}\label{SimplexBall}
%        	 \Delta(\bfx,d) =\Big\{\bfx+d\bfr\vert \bfr\in \rm{Conv}\{n\bfe_i-\bfone_n:i\in [n] \} \Big\}.
%        \end{equation}

        \item[{\rm (2)}]
        The intersection of two simplex balls, if nonempty, is again a simplex ball. In particular,
        \be \label{Eq: definition_d}
         \Delta_n\cap \Delta(\bfx,d)=\Delta(\bar{\bfx},\bar{d}) \ \
         \mbox{where} \ \
         \left\{
         \begin{array}{l}
         	\bar{d} = \frac{1}{n}\sum_{i=1}^n\min\{d, x_i\} \\ [0.2ex]
         	\bar{\bfx} = \max\{\bfx, d\bfone_n\}+(\bar{d}-d)\bfone_n.
         \end{array}
         \right .
        \ee

%        Moreover, for $\bfx_1,\bfx_2\in \Delta_n$ and radius $d_1,d_2>0$ such that
%         $\Delta(\bfx_1,d_1)\cap \Delta(\bfx_2,d_2)\neq\emptyset$, it holds
%         \[
%         \Delta(\bfx_1, d_1)\cap \Delta(\bfx_2,d_2)=\Delta(\bfx_3, d_3),
%         \]
%         where
%         \begin{equation}\label{Eq: intsect_simplex_balls}
%         \left\{
%         	\begin{aligned}
%         		&d_3   = \frac{1}{n}(1+\sum_{i=1}^n\min\{d_1-x_1(i),d_2-x_2(i) \}),\\
%         		&x_3(i)  = \max\{x_1(i)-d_1,x_2(i)-d_2 \}+ d_3, \ \ i\in[n].
%         	\end{aligned}
%         \right.
%         \end{equation}
%         Consequently, we have $d_3 \le \min\{d_1, d_2\}$.

%        \item[{\rm (3)}]  The linear optimization over a simplex ball has the following closed-form solution:
%        \begin{align*}
%          & \bfy^* := \bfx+ (nd) ( \bfe_{i^*}- \bfone_n/n ) \in  \argmin_{\bfy\in \Delta(\bfx,d)}\ \langle \bfc, \bfy\rangle \ \ \mbox{with} \ \ i^*=\argmin_{i\in [n]} c_i.
%        \end{align*}

        \item[{\rm (3)}] For any point $\bfy\in \Delta_n$, if $\lVert \bfx-\bfy\rVert\leq d$, then $\bfy\in \Delta(\bfx,d)$. 
    \end{enumerate}
\end{lemma}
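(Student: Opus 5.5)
We prove the three items of Lemma~\ref{Lemma: Simplex ball} in turn, working directly from the parametrization $\Delta(\bfx,d)=\{(\bfx-d\bfone_n)+nd\bflambda : \bflambda\in\Delta_n\}$. The first step is a coordinate description that the other items rely on. A point $\bfy$ lies in $\Delta(\bfx,d)$ if and only if
\[
\bfy\ge \bfx-d\bfone_n \quad\text{and}\quad \bfone_n^\top\bfy=\bfone_n^\top\bfx .
\]
Indeed, $\bflambda=(\bfy-\bfx+d\bfone_n)/(nd)$ is nonnegative exactly when the inequality holds. Its entries sum to one exactly when $\bfone_n^\top(\bfy-\bfx)+nd=nd$, which is the equality condition.

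\textbf{Item (1).} Take $\bfx=\frac1n\bfone_n$ and $d=\frac1n$. Then $\bfx-d\bfone_n=0$ and $nd=1$, so $\Delta(\bfx,d)=\{\bflambda:\bflambda\in\Delta_n\}=\Delta_n$.

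\textbf{Item (2).} We use the coordinate description with $\bfx\in\Delta_n$, so that $\bfone_n^\top\bfx=1$. The simplex is $\{\bfy\ge0,\ \bfone_n^\top\bfy=1\}$, and the ball is $\{\bfy\ge\bfx-d\bfone_n,\ \bfone_n^\top\bfy=1\}$. Their intersection is therefore
\[
\{\bfy\ge \bfb,\ \bfone_n^\top\bfy=1\},\qquad \bfb:=\max\{\bfx-d\bfone_n,0\}=\max\{\bfx,d\bfone_n\}-d\bfone_n .
\]
A set of this form equals $\Delta(\bar\bfx,\bar d)$ provided two conditions hold: $\bar\bfx-\bar d\bfone_n=\bfb$ and $\bfone_n^\top\bar\bfx=1$. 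From the first condition, $\bar\bfx=\bfb+\bar d\bfone_n=\max\{\bfx,d\bfone_n\}+(\bar d-d)\bfone_n$, which matches \eqref{Eq: definition_d}. The second condition then reads $n\bar d=1-\bfone_n^\top\bfb$.

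It remains to check that this value of $\bar d$ agrees with the formula in \eqref{Eq: definition_d}. Coordinatewise we have $x_i-\max\{x_i-d,0\}=\min\{d,x_i\}$. Summing over $i$ and using $\sum_i x_i=1$ gives $1-\bfone_n^\top\bfb=\sum_i\min\{d,x_i\}$, hence $\bar d=\frac1n\sum_i\min\{d,x_i\}$. This $\bar d$ is positive whenever $d>0$, since $\bfx\in\Delta_n$ has at least one positive entry; so the representation is legitimate. The intersection is nonempty because it contains $\bfx$.

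The general statement that two arbitrary simplex balls intersect in a simplex ball follows by the same argument. Replace the bound $\bfb$ by the coordinatewise maximum of the two lower bounds. Both balls must share the same sum $\bfone_n^\top\bfx$, and this is automatic when the intersection is nonempty.

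\textbf{Item (3).} Let $\bfy\in\Delta_n$ with $\|\bfx-\bfy\|\le d$. Then $\bfone_n^\top\bfy=1=\bfone_n^\top\bfx$. Moreover, for every $i$ we have $|y_i-x_i|\le\|\bfy-\bfx\|\le d$, so $y_i\ge x_i-d$. Both conditions of the coordinate description hold, so $\bfy\in\Delta(\bfx,d)$.

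The only step that needs care is the bookkeeping in item (2): the shift must be reconciled with the sum constraint, which is what produces the formula for $\bar d$.
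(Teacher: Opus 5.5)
Your proof is correct. The paper gives no argument for this lemma; it imports it from \cite{wang2025simplex}, so there is no in-paper proof to compare against. Your reduction to the description $\Delta(\bfx,d)=\{\bfy:\ \bfy\ge \bfx-d\bfone_n,\ \bfone_n^\top\bfy=\bfone_n^\top\bfx\}$ (valid because $d>0$) is a clean, self-contained proof of all three items. It also shows that item (3) needs only $\max_i|x_i-y_i|\le d$ together with $\bfone_n^\top\bfy=\bfone_n^\top\bfx$.

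One small caveat concerns your one-line remark on the general two-ball claim. There the new radius is $\bar d=(s-\bfone_n^\top\bfb)/n$, and nonemptiness only makes this nonnegative, not positive. For example, with $n=2$ the balls $\Delta((1,0),\tfrac12)$ and $\Delta((0,1),\tfrac12)$ meet only in the single point $(\tfrac12,\tfrac12)$. So the ``same argument'' needs either the convention $\Delta(\bfb,0)=\{\bfb\}$, which the defining formula does produce, or a nondegeneracy assumption. This does not affect the formula in \eqref{Eq: definition_d}, which is the only part the paper uses. There you correctly checked $\bar d>0$ using $\bfx\in\Delta_n$.
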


\section{Spectral Simplex ball and spectral oracle}
\label{section-spectral-ball-oracle} 
In this section, we formally introduce the spectral ball and explore its key properties. Subsequently, we present the Spectral Oracle (SO) based on spectral ball and propose an efficient algorithm for its computation. 

\subsection{Spectral Simplex ball and its properties}
\label{subsection-spectral-ball}
Motivated by the relationship between the simplex and the spectrahedron, we define the spectral ball by extending the concept of the simplex ball. The formal definition is as follows. 
\begin{definition}[Spectral Simplex ball]
For any $X\in S_n$ and $d> 0$, we define $S(X,d)$ as the spectral ball of radius $d$ centered at $X$ by
\begin{equation}
    S(X,d):=\{Q\diag(\bflambda)Q^T: \ Q\in \calO_n, \ \bflambda\in \simp(\bflambda(X),d) \}.
\end{equation}
\end{definition}

Although the spectral ball is a natural extension of the simplex ball, its geometric structure differs markedly from both the simplex ball and the spectrahedron. 
Unlike the simplex ball and spectrahedron, which are convex sets, the spectral ball is generally {non-convex}. 
Furthermore, the set-valued mapping $S: S_n \times \mathbb{R}_+ \rightrightarrows S_n$ induced by the spectral ball is not injective, in contrast to the simplex ball.
A particularly interesting feature is that even as the radius $d \to 0$, the diameter of $S(X, d)$ does not necessarily vanish, indicating that the spectral ball may retain a nontrivial extent in the limit.
The following example illustrates these properties.

\begin{example} \label{Example-Nonconvexity}
Let
\[
    X_1=\left(\begin{array}{cc}
        1 & 0 \\
        0 & 0
    \end{array} \right),\qquad 
    X_2=\left(\begin{array}{cc}
        0 & 0 \\
        0 & 1
    \end{array} \right).
\]
We can verify that
\begin{align*}
 S(X_1, d) = S(X_2, d) &=
 \left\{
  Q \begin{pmatrix}
  	1- \beta & \\
  	 & \beta 
  \end{pmatrix} Q \ \Big|\ \ Q \in \calO_2, \ -d \le \beta \le d  
 \right\} \\
 &= \left\{
  X \in \calS^2 \ \Big| \ \tr(X)=1, \ \lambda_{\max}(X) \le 1+d , \
                                      \lambda_{\min}(X) \le d
 \right\}.
\end{align*}
It is easy to see that $X_1, X_2 \in S(X_1, d) = S(X_2, d)$. However, $X_3 := \frac{1}{2}(X_1 + X_2) = \frac{I_2}{2}$ does not belong to $S(X_1, d)$
when $d < 1/2$. 
This example demonstrates three key facts.  
\begin{itemize}
    \item \textbf{Non-convexity}: the convex combination $X_3$ lies outside $S(X_1, d)$.
 The nonconvexity is also due to the constraint $\lambda_{\min}(X) \le d$ and the smallest eigenvalue function is nonconvex.
   
    \item \textbf{Non-injectivity}: distinct centers $X_1$ and $X_2$ yield the same spectral ball, i.e., $S(X_1, d) = S(X_2, d)$.
    
    \item \textbf{Non-vanishing diameter}: even when $d$ is arbitrarily small, the distance $\|X_1 - X_2\|_F = \sqrt{2}$ shows that the spectral ball retains a positive diameter as $d \to 0$.
\end{itemize}
\end{example}

Despite these differences, the spectral ball enjoys some very nice
properties.
%shares several key properties with the simplex ball, which are essential for our analysis.

\begin{lemma}\label{Lemma: Spectral-ball}
Given $X\in S_n$ and $d>0$, we have
\begin{enumerate}
    \item[(1)] The spectrahedron is a spectral ball, i.e., $S_n=S(I_n/n,1/n)$.
    \item[(2)] 
    Suppose $d < \lambda_1(X)$.
    The intersection of the spectrahedron and a spectral ball $S_n\cap S(X,d)$, is again a spectral ball $S(\overline{X},\overline{d})$, where
    \begin{equation}\label{Eq-spectral-d}
    \left\{
     \begin{array}{l}
         \overline{d}=\frac{1}{n}(1+rd-\sum_{i=1}^r\lambda_i(X)) \\ [0.6ex]
         \overline{X}=\overline{d}I_n+\diag\{\lambda_1(X)-d,\dots,\lambda_r(X)-d,0,\dots,0\},\\ 
     \end{array} 
     \right.
    \end{equation}
    with $r := \max \{ i \in [n] \mid \lambda_i(X) > d \}$.
    \item[(3)] Given a matrix $G \in \calS^n$,  
    the linear optimization 
    \[
       \min_{Y\in S(X,d)}\langle G,Y\rangle
    \]
    has a closed-form solution:
    \[
        Y^*=P \diag(\bflambda(X) - d \mathbf{1}_n + n d \mathbf{e}_1) P^T , 
    \]
    where $P^TGP = \diag(\bflambda_{\uparrow}(G))$.
     
    \item[(4)] For every $Y\in S_n$, if $\|X-Y \|_F\leq d$, then $Y\in S(X,d)$. 
    
\end{enumerate}
\end{lemma}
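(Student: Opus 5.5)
The plan is to push every item back to the vector statements of Lemma~\ref{Lemma: Simplex ball}, using two facts: the spectral map $\bflambda(\cdot)$ is orthogonally invariant, and membership in $S(X,d)$ depends only on the eigenvalue vector. For the latter, unitary invariance of $S(X,d)$ together with permutation invariance shows that $Y\in S(X,d)$ iff some permutation of $\bflambda(Y)$ lies in $\simp(\bflambda(X),d)$. Since $\simp(\bflambda(X),d)$ need not be permutation invariant, I keep the permutation explicit throughout.

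\textbf{(1).} The eigenvalue vector of $I_n/n$ is $\frac1n\bfone_n$. By Lemma~\ref{Lemma: Simplex ball}(1), $\simp(\frac1n\bfone_n,\frac1n)=\simp_n$. Hence $S(I_n/n,1/n)=\{Q\diag(\bflambda)Q^T:\bflambda\in\simp_n\}=S_n$.

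\textbf{(3).} The set $S(X,d)$ is the orthogonal orbit of $\simp(\bflambda(X),d)$. I minimize first over $Q$ with $\bflambda$ fixed, then over $\bflambda$.
\begin{itemize}
\item For fixed $\bflambda$, von Neumann's trace inequality gives $\min_Q\langle G,Q\diag(\bflambda)Q^T\rangle=\langle \bflambda_{\downarrow},\bflambda_{\uparrow}(G)\rangle$, where $\bflambda_\downarrow$ is the non-ascending rearrangement. The minimum is attained at $Q=P$ after sorting.
\item The problem therefore reduces to $\min\{\langle \bflambda_\downarrow,\bflambda_\uparrow(G)\rangle:\bflambda\in\simp(\bflambda(X),d)\}$. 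Write $\bflambda=\bflambda(X)-d\bfone+nd\bfw$ with $\bfw\in\simp_n$.
\item Since $\bflambda(X)$ is sorted, the claim is that the best choice is $\bfw=\bfe_1$. This adds mass $nd$ to the largest coordinate, so the vector stays sorted and is paired with the smallest eigenvalue of $G$.
\item To justify the claim, note that for any $\bfw$ the sorted vector $\bflambda_\downarrow$ majorizes-dominates: the partial sums of $\bflambda(X)-d\bfone+nd\bfe_1$ dominate those of any $\bflambda_\downarrow$. This holds because adding $nd\bfw$ increases the $k$-th partial sum of the sorted vector by at most $nd$.
\item Pairing a non-ascending vector with a non-decreasing $\bflambda_\uparrow(G)$ is Schur-monotone: by Abel summation, the inner product decreases under majorization. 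Since the sums are equal, majorization yields the minimum at $Y^*$.
\end{itemize}

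\textbf{(4).} By the Hoffman--Wielandt inequality, $\|\bflambda(X)-\bflambda(Y)\|\le\|X-Y\|_F\le d$. Since $\bflambda(Y)\in\simp_n$, Lemma~\ref{Lemma: Simplex ball}(3) gives $\bflambda(Y)\in\simp(\bflambda(X),d)$. Taking $Q$ to diagonalize $Y$ then shows $Y\in S(X,d)$.

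\textbf{(2).} This is the main obstacle, because of the permutation issue. The inclusion ``$\supseteq$'' is straightforward. By Lemma~\ref{Lemma: Simplex ball}(2), $\simp_n\cap\simp(\bflambda(X),d)=\simp(\bar\bfx,\bar d)$, and for sorted $\bflambda(X)$ with $r$ the number of entries exceeding $d$, formula~\eqref{Eq: definition_d} gives exactly
\[
\bar d=\tfrac1n\Big(\textstyle\sum_{i>r}\lambda_i+rd\Big)=\tfrac1n\Big(1+rd-\sum_{i\le r}\lambda_i\Big),
\qquad
\bar\bfx=\bar d\bfone+(\lambda_1-d,\dots,\lambda_r-d,0,\dots)^T,
\]
which is $\bflambda(\overline X)$ (already sorted). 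Hence $S(\overline X,\bar d)\subseteq S(X,d)$, and its elements are PSD with unit trace.

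For ``$\subseteq$'', take $Y\in S_n\cap S(X,d)$, so $Y=Q\diag(\bflambda)Q^T$ with $\bflambda\in\simp(\bflambda(X),d)$. Because $Y\succeq0$, we have $\bflambda\ge0$, so $\bflambda\in\simp_n$ and hence $\bflambda\in\simp(\bar\bfx,\bar d)$ with no need to sort. The key point is that the representing $\bflambda$ is not required to be sorted, so this direction actually avoids the permutation difficulty.

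I expect the main care to be in checking that $\bar\bfx$ matches $\bflambda(\overline X)$. The hypothesis $d<\lambda_1(X)$ is needed here: it guarantees $r\ge1$, which makes the formula well defined and keeps $\overline X$ consistent.
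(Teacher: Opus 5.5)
Your proposal is correct. Items (1), (2) and (4) follow the paper's proof; in (2) you only spell out the two inclusions behind the paper's one-line identification of $S_n\cap S(X,d)$ with the orthogonal orbit of $\simp_n\cap\simp(\bflambda(X),d)$. You are right there that the representing vector need not be sorted and that $Y\succeq 0$ forces it to be nonnegative.

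Item (3) takes a genuinely different route. The paper writes $\bflambda=\bflambda(X)-d\bfone_n+nd\bfw$ and splits the objective into a fixed piece $\tr(P^TGP\diag(\bflambda(X)-d\bfone_n))$ and a free piece $nd\,\tr(P^TGP\diag(\bfw))$. It bounds each piece separately by von Neumann's inequality. It then observes that both bounds are attained at the same pair, $P^TGP=\diag(\bflambda_{\uparrow}(G))$ and $\bfw=\bfe_1$; this works exactly because $\bflambda(X)-d\bfone_n$ is already sorted.

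You instead eliminate the orthogonal factor exactly for each fixed $\bflambda$. You then solve the reduced vector problem by showing that $\bflambda(X)-d\bfone_n+nd\bfe_1$ majorizes every point of $\simp(\bflambda(X),d)$, using subadditivity of top-$k$ sums, and finish with Abel summation.

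The paper's argument is shorter and needs nothing beyond von Neumann. However, its displayed chain of equalities tacitly replaces $\min_P$ of a sum by the sum of the separate minima, which is legitimate only because of that simultaneous attainment. Your argument avoids that step. It also gives a stronger, $G$-independent fact: the optimal spectrum is the majorization-maximal element of the simplex ball. Hence it minimizes every Schur-concave spectral objective over $S(X,d)$, and the reduced linear objective $\bflambda\mapsto\langle\bflambda_{\downarrow},\bflambda_{\uparrow}(G)\rangle$ is just one instance.
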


\begin{proof}
\begin{enumerate}
    \item[(1)] By the definition of the spectral ball and Lemma~\ref{Lemma: Simplex ball}(1),
    \[
        \begin{aligned}
        S(I_n/n, 1/n) &= \{ Q \diag(\lambda) Q^T : Q \in \mathcal{O}_n, \lambda \in \simp(\mathbf{1}_n/n, 1/n) \} \\
        &= \{ Q \diag(\lambda) Q^T : Q \in \mathcal{O}_n, \lambda \in \simp_n \} = S_n.
        \end{aligned}
    \]

    \item[(2)] By the definition of the spectral ball and 
    \eqref{Eq: definition_d}, we have
    %Lemma~\ref{Lemma: Simplex ball}(1),
    \[
    \begin{aligned}
        S_n \cap S(X, d) &= \{ Q \diag(\lambda) Q^T : Q \in \mathcal{O}_n, \lambda \in \simp_n \cap \simp(\bflambda(X), d) \} \\
        &= \{ Q \diag(\lambda) Q^T : Q \in \mathcal{O}_n, \lambda \in \simp(\bar{\bflambda}, \bar{d}) \},
    \end{aligned}
    \]
    where
    \[
    \left\{
    \begin{array}{l}
        \bar{d} = \frac{1}{n} \sum_{i=1}^n \min\{ d, \lambda_i(X) \}, 
        \\ [0.4ex]
        \bar{\bflambda} = \max\{ \bflambda(X), d \mathbf{1}_n \} + (\bar{d} - d) \mathbf{1}_n.
    \end{array}
    \right.
    \]
    Since $\sum_{i=1}^n \lambda_i(X) = 1$, we have
    \[
    \begin{aligned}
    	\overline{d} = & \frac{1}{n} \sum_{i=1}^n \min\{ d, \lambda_i(X) \} \\
    	& = \frac{1}{n} \left( r d + \sum_{i=r+1}^n \lambda_i(X) \right)
    	= \frac{1}{n} \left( 1 + r d - \sum_{i=1}^r \lambda_i(X) \right) ,\\
%    \end{aligned}
%    \]   
%    \[
%    \begin{aligned}
	\overline{\bflambda} & =\max\{ \bflambda(X), d \mathbf{1}_n \} + (\bar{d} - d) \mathbf{1}_n
	\\ 
	&= \overline{d} \mathbf{1}_n + (\lambda_1(X) - d, \dots, \lambda_r(X) - d, 0, \dots, 0)
	 = \bflambda(\overline{X}).
%	
%        \bflambda(\widehat{X}) &= \widehat{d} \mathbf{1}_n + (\lambda_1(X) - d, \dots, \lambda_r(X) - d, 0, \dots, 0)' \\
%        &= \max\{ \bflambda(X), d \mathbf{1}_n \} + (\bar{d} - d) \mathbf{1}_n = \bar{\bflambda}.
    \end{aligned}
    \]
In the above, we used the simple fact that $\overline{X}$ is a diagonal matrix.
    Thus, $S_n \cap S(X, d) = S(\overline{X}, \overline{d})$.
    
    \item[(3)] 
    By the definition of the spectral ball, any matrix $Y \in S(X,d)$ can be represented through its eigenvalue decomposition, which allows us to rewrite the problem as
    \begin{align}
        &\min_{Y \in S(X, d)} \langle G, Y \rangle \nonumber \\
        &= \min_{\bflambda \in \simp(\bflambda(X), d)} \min_{P \in \mathcal{O}_n} \tr(G P \diag(\bflambda) P^T) \nonumber \\
        &= \min_{\bflambda \in \simp(\bflambda(X), d)} \min_{P \in \mathcal{O}_n} \tr(P^T G P \diag(\bflambda)) \nonumber \\
        &\stackrel{\eqref{SimplexBall-New}}{=} \min_{\bflambda \in \simp_n} \min_{P \in \mathcal{O}_n}
        \tr\big(P^T G P \diag(\bflambda(X) - d \bfone_n + n d \bflambda)\big) \nonumber \\
        &= \min_{\bflambda \in \simp_n} \min_{P \in \mathcal{O}_n}
        \tr\Big(P^T G P \diag(\bflambda(X) - d \bfone_n)\Big)
        + nd \cdot \tr\Big(P^T G P \diag(\bflambda)\Big)  \nonumber \\
        &= \min_{P \in \mathcal{O}_n} \min_{\bflambda \in \simp_n}
        \tr\Big(P^T G P \diag(\bflambda(X) - d \bfone_n)\Big)
        + nd \cdot \tr\Big(P^T G P \diag(\bflambda)\Big)
         \nonumber \\
        &= \min_{P \in \mathcal{O}_n}
        \left\{
        \tr\Big(P^T G P \diag(\bflambda(X) - d \bfone_n)\Big)
        + nd \cdot 
        \min_{\bflambda \in \simp_n} 
        	\tr\Big(P^T G P \diag(\bflambda)\Big)
        \right\} .  \nonumber %  \label{Eq-GY}
%        \underbrace{\min_{\bflambda \in \simp_n} 
%        \tr\Big(P^T G P \diag(\bflambda)\Big)}_{= nd \cdot \lambda_n(G)  }
%        \right\}
    \end{align}
% We note that given optimal $P$ and $\bflambda$ in the above optimization problem, the optimal $Y^*$ is given by
%\be \label{Optimal-Y*}
% Y^* = P \diag(\bflambda(X) - d \bfone_n + n d \bflambda) P^T.
%\ee 
Due to von Neumann’s trace inequality, the second term in the last equation becomes
\be \label{Neumann-1}
\min_{\bflambda \in \simp_n} \min_{P \in \mathcal{O}_n}
\tr\big(P^T G P \diag(\bflambda)\big)
=  \lambda_n(G).
\ee
Therefore
\begin{align} \label{Neumann-2}
	&\min_{Y \in S(X, d)} \langle G, Y \rangle  \nonumber \\
	&= nd \cdot \lambda_n(G) + 
	  \min_{P \in \mathcal{O}_n}
	  \left\{
	  \tr\Big(P^T G P \diag(\bflambda(X) - d \bfone_n)\Big) \right\}
	   \nonumber \\
	&= nd \cdot \lambda_n(G) +
	\langle \bflambda_{\uparrow}(G), \bflambda(X) - d \bfone_n \rangle,	
\end{align}
where last identity follows again the von Neumann’s trace inequality. 
%  %  where the last identity follows from~\eqref{SimplexBall-New}.
%    Next, we decompose the objective into two parts and bound them separately:
%    \begin{align*}
%        &\min_{\bflambda \in \simp_n} \min_{P \in \mathcal{O}_n}
%        \tr\big(P^T G P \diag(\bflambda(X) - d \bfone_n)\big)
%        + nd \cdot \tr\big(P^T G P \diag(\bflambda)\big) \\
%        \leq\;&
%        \min_{P \in \mathcal{O}_n}
%        \tr\big(P^T G P \diag(\bflambda(X) - d \bfone_n)\big)
%        + \min_{\bflambda \in \simp_n} \min_{P \in \mathcal{O}_n}
%        nd \cdot \tr\big(P^T G P \diag(\bflambda)\big).
%    \end{align*}
%    For the first term, applying von Neumann’s trace inequality yields
%    \[
%        \min_{P \in \mathcal{O}_n}
%        \tr\big(P^T G P \diag(\bflambda(X) - d \bfone_n)\big)
%        = \langle \bflambda_{\uparrow}(G), \bflambda(X) - d \bfone_n \rangle.
%    \]
%    For the second term,applying again von Neumann’s trace inequality, we obtain
%    
%    Combining the two parts, we conclude that
%    \[
%        \min_{Y \in S(X, d)} \langle G, Y \rangle
%        = \langle \bflambda_{\uparrow}(G), \bflambda(X) - d \bfone_n \rangle
%        + nd \lambda_n(G).
%    \]
%    
Moreover, both minima in \eqref{Neumann-1} and \eqref{Neumann-2} are attained when $P^T G P = \diag(\bflambda_{\uparrow}(G))$ and $\bflambda = \bfe_1$. 
Substituting these optimal choices into the parameterization in $Y$
  yields the desired solution:
    \[
        Y^* = P \diag(\bflambda(X) - d \mathbf{1}_n + n d \mathbf{e}_1) P^T.
    \]
    
    \item[(4)] Since $\| X - Y \|_F \leq d$, by the Hoffman-Wielandt theorem,
    \[
    \| \bflambda(X) - \bflambda(Y) \| \leq \| X - Y \|_F \leq d.
    \]
    By Lemma~\ref{Lemma: Simplex ball}(3), we have $\bflambda(Y) \in \simp(\bflambda(X), d)$. Thus, by the definition of the spectral ball, $Y \in S(X, d)$.
\end{enumerate}
\end{proof}

%We make some useful observations. 
When constructing the spectral ball $S(\overline{X}, \overline{d})$ in
Lemma~\ref{Lemma: Spectral-ball}(2), it is only meaningful to 
set $d < \lambda_1(X)$ (i.e., $d$ is strictly less than the largest 
eigenvalue of $X$). Otherwise, $r$ is not well defined.
Let us see what the set $S(\overline{X}, \overline{d})$ is
when $X = X_1$ and $d < \lambda_1(X_1) = 1$ from Example~\ref{Example-Nonconvexity}.
It is easy to see that $r=1$,
\[
  \overline{d} = \frac 12 (1+d-1) = \frac 12 d
  \quad
  \mbox{and} \quad
 \overline{X} = \diag(1- d/2, d/2) ,
\]
%Therefore,
\begin{align*}
	\Delta( \bflambda (\overline{X}), \overline{d} )
	&= \left\{
	 \begin{pmatrix}
	 	1- d \\
	 	0
	 \end{pmatrix} + d \bflambda \ \Big| \ 
	 \bflambda \in \Delta_2
	\right\} 
	= \left\{
	 \begin{pmatrix}
	 	1-\beta \\
	 	\beta 
	 \end{pmatrix} \ \Big| \ 0 \le \beta \le d
	\right\} .
\end{align*}
Therefore,
\[
 S(\overline{X}, \overline{d})=
 \left\{
 Q \begin{pmatrix}
 	1- \beta & \\
 	& \beta 
 \end{pmatrix} Q \ \Big|\ \ Q \in \calO_2, \ 0 \le \beta \le d  
 \right\} \\
\]
Comparing with the set $S(X_1, d)$, we see the set 
$S(\overline{X}, \overline{d})$ retains all the positive semidefinite matrices in $S(X_1, d)$. In other words, it is the set of the projections of the set 
$S(X_1, d)$ to the positive semidefinite cone $\calS^2_+$.
Once again $S(\overline{X}, \overline{d})$ is not convex. But a linear optimization has a closed-form solution that can be cheaply computed.

%%%-----------------------------------
\subsection{Spectral oracle}
\label{subsection-spectral-oracle}

We now formally define the proposed linear minimization oracle based on the spectral ball, referred to as the \emph{spectral oracle} (SO). 
The spectral oracle can be regarded as a natural extension of the standard LMO.
Its constraint set is a spectral ball rather than a spectrahedron.  

\begin{definition}[Spectral oracle]\label{Def: SO}
    Given a linear objective $G\in\mathbb{S}^n$, radius $d>0$ and a point $X\in S_n$, a solution $Y^*\in \SO(X,d,G)$ is referred to as a spectral oracle if
    \begin{equation}\label{Eq: SO}
  %      Y^*\in \argmin_{Y\in S_n \cap S(X,d)}\langle Y,G\rangle.
  Y^*\in \argmin \left\{ \langle Y,G\rangle \ | \
     Y\in S_n \cap S(X,d)   \right\} .
    \end{equation}
\end{definition}

\begin{definition}[Thresholding rank of a matrix] \label{Def-ThresholdingRank}
Let $X \in \calS^n$ and $d>0$ be given, the thresholding rank of $X$ with respect to $d$ is
\[
 r_d(X) :=  \max \{ i \in [n] \mid \lambda_i(X) > d \}.
% \| \bflambda(  X - d I_n )_+ \|_0 .
%  \quad \mbox{where} \quad 
% \calT_d(X) := \bflambda(X) \circ \sgn \Big(
% ( \bflambda(X) - d \bfone_n )_+ 
% \Big) .
\]
\end{definition} 
The thresholding rank $r_d(X)$ counts the number of eigenvalues of $X$ bigger than the thresholding
value $d$.
It has been used in Lemma~\ref{Lemma: Spectral-ball}(2).
%It has $r_d(X) = \max \{ i \in [n] \mid \lambda_i(X) > d \}$, which is used in Lemma~\ref{Lemma: Spectral-ball}(2).
%For a given $d>0$, let us define the corresponding eigenvalue thresholding operator $\calT: \calS^n \mapsto \mathbb{R}^n$ by
%\[
% \calT_d(X) := \bflambda(X) \circ \sgn \Big(
%  ( \bflambda(X) - d \bfone_n )_+ 
% \Big) 
% \quad \mbox{and} \quad
% r(X) := \| \calT_d(X) \|_0 ,
%\]
%where $\| \bfx\|_0$ is the zero-norm of $\bfx$ counting the nonzero elements 
%in $\bfx$.
%In fact, the thresholding operator $\calT_d(X)$ is the eigenvalue vector of the projection matrix 
%of $(X- d I_n)$ onto $\calS^n_+$.
%Obviously, $\calT_d(X)$ only retains those eigenvalues of $X$ bigger than the given threshold $d$ and  
%$r(X)$ counts how many such eigenvalues there are.
%Furthermore, the spectral Simplex ball $S_n \cap S(X, d) = S(\widehat{X}, \widehat{d})$ is given
%by
%\[
%  \widehat{d} = \frac 1n \Big(
%   (d+1)r_d(X) - \langle \bfone_n, \calT_d(X) \rangle 
%  \Big), \ 
%  \widehat{X} = \widehat{d} I_n 
%  + \diag\Big( (\lambda(X) - d\bfone_n)_+ \Big).
%\]
%It follows from Lemma~\ref{Lemma: Spectral-ball}(2) that the optimal
%solution of \eqref{Eq: SO} is given by
%\[
%  Y^* = V^\top \diag( \calT_d(X) ) V + n \widehat{d} \bfv_n \bfv_n^\top,
%\]
%where 
%$$ V := [\bfv_n, \cdots, \bfv_1]\ \ \mbox{satisfies} \ \
%   G = V^T \diag( \bflambda_{\uparrow} (G)) V.
%$$  
%An important observation is that, although problem~\eqref{Eq: SO} is non-convex, it admits a simple and efficient solution procedure, as summarized below.

\begin{definition}[Thresholding mapping] \label{Mapping-H}
The thresholding mapping
$\calH: \calS^n \times \calS^n \times \mathbb{R}_{+} \times [n] \to \calS^n$ is defined by
\[
 \calH(X, G, d, r) := V \Big(  \Lambda_r - d I_r \Big) V^T + n \widehat{d} \bfv_n \bfv_n^T, \quad
 X, G \in \calS^n, \ d >0 , \ r \in [n],
\]
where
\be \label{H-V}
\left\{
\begin{array}{lll}
	\Lambda_r &=&\diag\{\lambda_1(X),\dots,\lambda_r(X)\} \\ [0.6ex]
	\widehat{d} &= &\frac{1}{n}(1+rd-\sum_{i=1}^r\lambda_i(X)) \\ [0.6ex]
	V &= &[\bfv_n,\dots,\bfv_{n-r+1}]=\mbox{EV}_r(G)\in\mathbb{R}^{n\times r} .
\end{array} 
\right.
\ee 
\end{definition}

The spectral oracle defined in Def.~\ref{Def: SO} is given in the following result.

\begin{lemma}\label{Lemma-SO}
    The spectral oracle $Y^*$ of the optimal solution to Problem \eqref{Eq: SO} is given by
   \[
     Y^* = \calH(X, G, d, r_d(X)) .
   \]
\end{lemma}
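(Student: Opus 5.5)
The plan is to combine Lemma~\ref{Lemma: Spectral-ball}(2) and (3). With $r := r_d(X)$ and assuming $d<\lambda_1(X)$ so that $r\ge 1$ is well defined, Lemma~\ref{Lemma: Spectral-ball}(2) rewrites the feasible set of \eqref{Eq: SO} as a single spectral ball:
\[
S_n\cap S(X,d)=S(\overline{X},\overline{d}).
\]
Here $\overline{d}=\widehat{d}$ from \eqref{H-V}, and
\[
\bflambda(\overline{X}) = \widehat{d}\,\bfone_n + (\lambda_1(X)-d,\dots,\lambda_r(X)-d,0,\dots,0)^T .
\]
The nonconvex problem \eqref{Eq: SO} is therefore a linear minimization over $S(\overline{X},\widehat{d})$. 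Lemma~\ref{Lemma: Spectral-ball}(3), applied with center $\overline{X}$ and radius $\widehat{d}$, gives the minimizer
\[
Y^*=P\,\diag\big(\bflambda(\overline{X})-\widehat{d}\,\bfone_n+n\widehat{d}\,\bfe_1\big)P^T,
\qquad P^TGP=\diag(\bflambda_{\uparrow}(G)).
\]

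Next I simplify the diagonal. Since $\bflambda(\overline{X})$ is already sorted nonincreasingly, subtracting $\widehat{d}\,\bfone_n$ leaves
\[
\bflambda(\overline{X})-\widehat{d}\,\bfone_n=(\lambda_1(X)-d,\dots,\lambda_r(X)-d,0,\dots,0)^T .
\]
Adding $n\widehat{d}\,\bfe_1$ puts the weight $n\widehat{d}$ on the first coordinate. The columns of $P$ are the eigenvectors of $G$ in increasing eigenvalue order, so $P=[\bfv_n,\bfv_{n-1},\dots,\bfv_1]$ in the paper's labeling, where $\bfv_n$ belongs to $\lambda_n(G)$. The first $r$ columns of $P$ are then exactly $V=\mbox{EV}_r(G)=[\bfv_n,\dots,\bfv_{n-r+1}]$, and the first column is $\bfv_n$. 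Expanding $P\,\diag(\cdot)P^T$ as a sum of rank-one terms gives
\[
Y^*=V(\Lambda_r-dI_r)V^T+n\widehat{d}\,\bfv_n\bfv_n^T=\calH(X,G,d,r_d(X)).
\]

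The main obstacle is bookkeeping. I must check that the two sorting conventions line up: $\bflambda(X)$ is in nonascending order, while $G$ is diagonalized in ascending order. Because of this, the largest shifted eigenvalue $\lambda_1(X)-d$, and also the extra mass $n\widehat{d}$, both pair with the smallest eigenvalue of $G$. That is exactly what von Neumann's inequality requires, and it is already built into Lemma~\ref{Lemma: Spectral-ball}(3).

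I will also verify feasibility directly as a sanity check. The diagonal entries are nonnegative because $\lambda_i(X)>d$ for $i\le r$ and $\widehat{d}\ge 0$. The trace is $\sum_{i\le r}\lambda_i(X)-rd+n\widehat{d}=1$.

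Finally, I will note the degenerate case $d\ge\lambda_1(X)$. There $r_d(X)$ is undefined; taking the convention $r=0$ gives $\widehat{d}=1/n$, the feasible set is $S_n$ by Lemma~\ref{Lemma: Spectral-ball}(1), and $\calH$ reduces to the standard LMO $\bfv_n\bfv_n^T$. I will either exclude this case by assumption, matching Lemma~\ref{Lemma: Spectral-ball}(2), or state the convention explicitly.
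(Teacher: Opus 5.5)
Your proposal is correct and follows the paper's proof essentially step for step. Like the paper, you rewrite the feasible set as $S(\overline{X},\overline{d})$ via Lemma~\ref{Lemma: Spectral-ball}(2), observe that $\overline{d}=\widehat{d}$ and $\bflambda(\overline{X})-\overline{d}\bfone_n=(\lambda_1(X)-d,\dots,\lambda_r(X)-d,0,\dots,0)$, apply Lemma~\ref{Lemma: Spectral-ball}(3) with $P=[\bfv_n,\dots,\bfv_1]$, and expand. Your explicit pairing of $\lambda_1(X)-d+n\widehat{d}$ with $\bfv_n$ is actually cleaner than the paper's intermediate display, which lists the diagonal in reversed order, and your remark on the degenerate case $d\ge\lambda_1(X)$ (where $r=0$ recovers the standard LMO $\bfv_n\bfv_n^T$) is a harmless addition the paper leaves implicit.
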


\begin{proof}
For given $X \in \calS^n$ and $d>0$, we have $S_n \cap S(X, d) = S(\overline{X}, \overline{d})$,
where $(\overline{X}, \overline{d})$ are defined in \eqref{Eq-spectral-d}. 
We note that when $r$ is taken to be the thresholding rank $r_d(X)$, $\widehat{d} = \overline{d}$,
where $\widehat{d}$ is defined in \eqref{H-V}. Furthermore,
\[
  \bflambda(\overline{X}) - \overline{d} \bfone_n 
  = (\lambda_1(X ) -d, \cdots, \lambda_{r_d(X)} (X) -d, 0, \cdots, 0 ).
\]
 The solution formula in 
Lemma~\ref{Lemma: Spectral-ball}(3) yields
  \begin{eqnarray*}
        && \argmin_{Y\in S_n \cap S(X,d)}\langle Y,G\rangle 
        = \argmin_{Y\in S(\overline{X},\overline{d})}\langle Y,G\rangle \\
        &=& [\bfv_n,\dots,\bfv_1]\diag(\bflambda(\overline{X})-\overline{d}\bfone_n
        + n\overline{d}\bfe_1)[\bfv_n,\dots,\bfv_1]^T \\
%        &=& [\bfv_n,\dots,\bfv_1]\diag\{\lambda_1(X)-d+n\widehat{d},\dots,\lambda_r(X)-d,0,\dots,0\}[\bfv_n,\dots,\bfv_1]^T \\
        &=& V\diag\{\lambda_r(X)-d,\dots,\lambda_1(X)-d\}V^T+n\widehat{d}\bfv_n\bfv_n^T \\
        &=& V\Lambda_{r}V^T-dVV^T+n\widehat{d}\bfv_n\bfv_n^T\\
        &=&  \calH(X, G, d, r_d(X)).
    \end{eqnarray*}
where we note once again $r= r_d(X)$. 
%    where the first identity uses Lemma~\ref{Lemma: Spectral-ball}(2) and the second uses Lemma~\ref{Lemma: Spectral-ball}(3).
%    This matches the output of Algorithm~\ref{Alg: SO}.
\end{proof}

The spectral oracle solution $Y^*$ requires determining $r_d(X)$, which counts the number of positive eigenvalues of $(X - d I_n)$ and it is a classical matrix signature problem.
%\[
%            \lambda_1(X)\geq \cdots \geq \lambda_r(X)>d\geq \lambda_{r+1}(X)\geq\cdots \lambda_n(X).
%\]
The standard approach employs an $LDL$ decomposition~\cite{ashcraft1998accurate} of $(X - d I_n)$ and counts the positive diagonal entries of $D$ according to Sylvester’s law of inertia, which incurs a computational cost of $O(n^3)$. 
Once $r_d(X)$ is determined, the computation of $Y^*$ needs one more eigen-decomposition on the
matrix $G$ requiring $O(r^2n^2)$ cost. 
A routine extension is to employ  the new spectral oracle in a standard FW framework to get a 
linearly convergent FW method. 
A major reason against such a straightforward extension is that the Spectral FW
\cite{ding2020spectral} has already achieved such linear rate target if the oracle cost is allowed
to be more than $O(r^2n^2)$ with $r\ge r^*$ (the optimal rank).
Our main task is to develop a new procedure that brings down the oracle complexity below
$O((r^*)^2n^2)$ and only requires $r \le r^*$ for each iteration.

%\section{Main results}
\section{Spectral simplex FW methods} 
\label{section-main-results}

After the preparation above, we present our main algorithmic contributions in this section. 
We begin with a new Efficient Spectral Oracle (ECO) based on a new concept of efficient rank
and develop a natural Frank--Wolfe variant based on ECO, which serves as a baseline and reveals a key rank-adaptive property. 
We then build upon this insight to develop a refined variant that achieves linear convergence while preserving rank adaptivity.

Let us formally state the subproblem encountered in the spectral Simplex FW methods 
for the problem \eqref{Eq: Problem} at
the current iterate $(X_{t-1}, d_{t-1})$: % with the Simplex ball radius $d_{t-1}$:
\be \label{Subproblem}
Y_t \in \argmin_{Y}\ \langle Y, \nabla f(X_{t-1}) \rangle
\quad \mbox{s.t.} \ Y \in S(X_{t-1}, d_{t-1}).
\ee 
By Lemma~\ref{Lemma-SO}, the optimal solution is given by
\be \label{Yt}
{Y}_t = \calH(X_{t-1}, \nabla f(X_{t-1}), d_{t-1}, r_t)
= V (\Lambda_{ r_t} - d_{t-1} I_{r_t} )V^T + n \widehat{d} \bfv_n \bfv_n^T  ,
\ee 
where
\[ %\be \label{Yt-V}
\left\{
\begin{array}{ll}
	r_t &= r_{d_{t-1}}(X_{t-1}) \\ [0.6ex]
	\Lambda_{ r_t} &= \diag( \lambda_1 ( X_{t-1} ), \cdots, 
	\lambda_{r_t } ( X_{t-1} )) \\ [0.6ex]
	V &= 
	\begin{bmatrix}
		\bfv_n, & \cdots, & \bfv_{ n - {r}_t +1}
	\end{bmatrix}
	= \mbox{EV}_{{r}_t}  ( \nabla f( X_{t-1} ) ) \\ [0.6ex]
	\widehat{d} &= \frac 1n \Big( 
	1 + {r}_t d_{t-1} - \sum_{i=1}^{{r}_t } \lambda_i ( X_{t-1} )
	\Big) .
\end{array} 
\right .
\] %\ee

%%%%%%
\subsection{Efficient ranks}
The thresholding rank $r_t$ plays an important role in computing $Y_{t-1}$ and
obtaining it 
would compute all eigenvalues of $X_{t-1}$ at cost $O(n^3)$, which is incompatible with matching the standard Frank--Wolfe per-iteration cost. 
The main purpose of this part is to develop an alternative called efficient rank.

%{\bf (A) Defining efficient rank} for $(X_{t-1}, d_{t-1})$.
%Suppose we are at $(X_{t-1}, d_{t-1})$.
The proposed efficient rank for the iterate $X_{t-1}$ depends on the efficient rank information of the previous iterates. 
Let
\[
\widehat{r}_0 = 0 \quad \mbox{and define}\quad
\overline{r}_t := \max\left\{
\widehat{r}_i \ | \ i=0, 1, \ldots, t
\right\}.
\]
At $(X_{t-1}, d_{t-1})$, compute only $( \overline{r}_{t-1} +1 )$ largest
eigenvalues of $X_{t-1}$.
Update $\widehat{r}_t$ by
\be \label{Update-rhat}
\widehat{r}_t =
\left\{ 
\begin{array}{ll}
r_t = \max \{i\in [\bar{r}_{t-1}]\mid \lambda_i(X_{t-1})>d_{t-1}\} 
& \mbox{if} \ d_{t-1}\geq \lambda_{\overline{r}_{t-1}+1}(X_{t-1}) \\
\overline{r}_{t-1} +1 & \mbox{otherwise} ,
\end{array} 
\right .
\ee 
where we adopt the convention $\max \emptyset = 0$.
For easy reference, we call $\widehat{r}_t$ the efficient rank at
$(X_{t-1}, d_{t-1})$ because it depends on both of them.
Associated with $\widehat{r}_t$, we compute
\be \label{Yhat}
\widehat{Y}_t = \calH(X_{t-1}, \nabla f(X_{t-1}), d_{t-1}, \widehat{r}_t)
= V_t ( \Lambda_{ \widehat{r}_t}  - d_{t-1} I_{ \widehat{r}_t } ) V^T_t 
    + n \widehat{d}_{t-1} \bfv_n \bfv_n^T  ,
\ee 
where
\be \label{Yhat-V}
\left\{
\begin{array}{ll}
\Lambda_{ \widehat{r}_t} &= \diag( \lambda_1 ( X_{t-1} ), \cdots, 
\lambda_{\widehat{r}_t } ( X_{t-1} )) \\ [0.3ex]
V_t &= 
\begin{bmatrix}
	\bfv_n, & \cdots, & \bfv_{ n - \widehat{r}_t +1}
\end{bmatrix}
= \mbox{EV}_{\widehat{r}_t}  ( \nabla f( X_{t-1} ) ) \\
\widehat{d}_{t-1} &= \frac 1n \Big( 
1 + \widehat{r}_t d_{t-1} - \sum_{i=1}^{\widehat{r}_t } \lambda_i ( X_{t-1} )
\Big) .
\end{array} 
\right .
\ee 
This matrix  $\widehat{Y}_t$ will replace our theoretical $Y_t$ and
will be sufficient in achieving the desired convergence properties.
In fact, when the condition $d_{t-1}\geq \lambda_{\overline{r}_{t-1}+1}(X_{t-1})$ holds, we
have $\widehat{r}_t = r_t$ and hence $\widehat{Y}_t = Y_t$.
The following result shows that $\widehat{Y}_t$ inherits the key optimality property of $Y_t$.

\begin{lemma} \label{Lemma-Yhat}
Let $Y_t$ and $\widehat{Y}_t$ be respectively defined by \eqref{Yt} and \eqref{Yhat}.
Suppose $X_{t-1} \in S_n$.	We have
\be \label{Yhat-Sn}
 \widehat{Y}_t \in \calC_{\widehat{r}_t} ( \nabla f(X_{t-1}) ) \subseteq S_n ,
\ee 
with $\calC_r(\cdot)$ being the spectral $r$th set introduced in Lemma~\ref{Lemma-Ding},
and
\begin{eqnarray}
\langle \nabla f(X_{t-1}), \widehat{Y}_t \rangle 
&\le& \langle \nabla f(X_{t-1}), Y_t \rangle  \label{Optimality-Condition} \\
&\le& \langle \nabla f(X_{t-1}), X \rangle , \quad \forall  \ X \in S_n \cap S(X_{t-1}, d_{t-1})
   \nonumber  .
\end{eqnarray} 
\end{lemma}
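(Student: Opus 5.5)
Three claims need proof: membership \eqref{Yhat-Sn}; the first inequality $\langle \nabla f(X_{t-1}),\widehat{Y}_t\rangle\le\langle \nabla f(X_{t-1}),Y_t\rangle$; and the second inequality, which follows directly from Lemma~\ref{Lemma-SO}, since $Y_t$ minimizes $\langle \nabla f(X_{t-1}),\cdot\rangle$ over $S_n\cap S(X_{t-1},d_{t-1})$. Throughout write $G=\nabla f(X_{t-1})$, $\lambda_i=\lambda_i(X_{t-1})$, $d=d_{t-1}$, and let $\bfv_n,\bfv_{n-1},\dots$ be eigenvectors of $G$ for its eigenvalues in non-decreasing order, $\mu_1\le\mu_2\le\cdots$ (so $\mu_1=\lambda_n(G)$).

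For membership, first note $\widehat{r}_t\ge 1$ always holds. In the first case of \eqref{Update-rhat}, set $\bar r_{t-1}=\bar r$. If $\lambda_1\le d$ then $d\ge\lambda_1\ge\lambda_{\bar r+1}$, and I would argue that $d$ is kept below $\lambda_1$ by the algorithm so that $r_t\ge1$, as Lemma~\ref{Lemma: Spectral-ball}(2) requires. Writing $\widehat{Y}_t=V_t S V_t^T$ with $V_t=\mbox{EV}_{\widehat r_t}(G)$, the matrix $S$ is diagonal with entries $\lambda_i-d$ for $i=1,\dots,\widehat r_t$, plus $n\widehat d_{t-1}$ added to the entry belonging to $\bfv_n$. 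The trace is $\sum_{i\le \widehat r_t}\lambda_i-\widehat r_t d+1+\widehat r_t d-\sum_{i\le\widehat r_t}\lambda_i=1$. For nonnegativity:
\begin{itemize}
\item In the first case, $\lambda_i>d$ for $i\le\widehat r_t=r_t$, so all entries $\lambda_i-d$ are positive.
\item In the second case, $d<\lambda_{\bar r+1}\le\lambda_i$ for all $i\le\bar r+1=\widehat r_t$, so again every entry is positive.
\item The remaining term satisfies $n\widehat d_{t-1}=1-\sum_{i\le\widehat r_t}(\lambda_i-d)\ge 1-\sum_{i\le\widehat r_t}\lambda_i+\widehat r_t d\ge 0$, using $\sum_i\lambda_i=1$ and $\lambda_i\ge0$.
\end{itemize}
Hence $S\in S_{\widehat r_t}$, and $\widehat Y_t\in\calC_{\widehat r_t}(G)\subseteq S_n$.

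For the first inequality, in the first case $\widehat r_t=r_t$. This needs a short check: $r_t=r_d(X_{t-1})$ counts all $i\in[n]$ with $\lambda_i>d$, and since $\lambda_{\bar r+1}\le d$ these indices all lie in $[\bar r]$. So $\widehat Y_t=Y_t$ and equality holds. In the second case $d<\lambda_{\bar r+1}$ gives $r_t\ge\bar r+1=\widehat r_t$. It then suffices to show that $k\mapsto\phi(k):=\langle G,\calH(X_{t-1},G,d,k)\rangle$ is nonincreasing on $k\le r_t$. Direct computation gives
\[
\phi(k)=\sum_{i=1}^{k}(\lambda_i-d)\mu_i+\Big(1-\sum_{i=1}^k(\lambda_i-d)\Big)\mu_1,
\]
where the weight $\lambda_i-d$ is paired with $\mu_i$, matching the ordering in Lemma~\ref{Lemma-SO}. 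Then $\phi(k+1)-\phi(k)=(\lambda_{k+1}-d)(\mu_{k+1}-\mu_1)$. For $k+1\le r_t$ we have $\lambda_{k+1}>d$ and $\mu_{k+1}\ge\mu_1$, so this difference is nonnegative. Thus $\phi$ is nondecreasing in $k$ up to $r_t$, and $\phi(\widehat r_t)\le\phi(r_t)$ is exactly the first inequality.

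The main obstacle is the bookkeeping of the pairing between the eigenvalues of $X_{t-1}$ and the eigenvectors in $V$: one must confirm that $\lambda_1$ is paired with $\bfv_n$, the same vector carrying the $n\widehat d$ term. This is what makes the increments telescope into the sign-definite form above. A secondary point is verifying $\widehat r_t\le r_t$ in both cases.
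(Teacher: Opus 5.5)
Your argument is essentially the paper's. You use the same case split on whether $d_{t-1}\ge\lambda_{\overline{r}_{t-1}+1}(X_{t-1})$ and the same trace and nonnegativity check; you are in fact more explicit than the paper about $n\widehat{d}_{t-1}\ge 0$. Your telescoped increments $(\lambda_{k+1}-d)(\mu_{k+1}-\mu_1)$ sum to exactly the paper's identity $\langle G,Y_t-\widehat{Y}_t\rangle=\sum_{j=\widehat{r}_t+1}^{r_t}(\lambda_j-d)(\lambda_{n-j+1}(G)-\lambda_n(G))\ge 0$. Where you say $\phi$ should be ``nonincreasing'' you mean nondecreasing, which is what you prove and what is needed.

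One step is wrong, though, and cannot be carried out as you intend. It is not true that $\widehat{r}_t\ge 1$ always holds, and nothing in the algorithm keeps $d_{t-1}$ below $\lambda_1(X_{t-1})$.

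\begin{itemize}
\item The initial radius $d_0=\sqrt{2(f(X_0)-B_0)/\mu}$ depends on an arbitrary lower bound $B_0$, so it can exceed $1\ge\lambda_1(X_0)$.
\item Even without a loose $B_0$ this happens. Take $X_0=I_n/n$ with $r^*<n$ and any admissible $B_0$. Quadratic growth and Hoffman--Wielandt give $d_0\ge\dist(X_0,\Xstar)\ge\sqrt{n-r^*}/n\ge\lambda_1(X_0)$.
\item The first branch of the rank update, with the convention $\max\emptyset=0$, then gives $\widehat{r}_t=r_t=0$. The paper's experiments report exactly this, both on the initial plateau and later.
\end{itemize}

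The fix is to handle this case rather than rule it out. Suppose $d_{t-1}\ge\lambda_1(X_{t-1})$.
\begin{itemize}
\item The simplex-ball intersection formula gives $\bar d=1/n$ and center $\bfone_n/n$, so $S_n\cap S(X_{t-1},d_{t-1})=S_n$.
\item Since $\widehat{d}_{t-1}=1/n$, you get $\widehat{Y}_t=Y_t=\bfv_n\bfv_n^T$. This is the classical Frank--Wolfe vertex: it lies in $\calC_1(\nabla f(X_{t-1}))$ and minimizes $\langle\nabla f(X_{t-1}),\cdot\rangle$ over $S_n$.
\end{itemize}
The paper's own proof passes over this degenerate case silently, and $\calC_0$ has to be read as $\calC_1$ there. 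With the case treated separately, the rest of your proof goes through unchanged.
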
 

\begin{proof}
For the case $d_{t-1}\geq \lambda_{\overline{r}_{t-1}+1}(X_{t-1})$, we have $\widehat{r}_t = r_t$
and hence $\widehat{Y}_t = Y_t$, which automatically satisfies
 $\widehat{Y}_t \in S_n \cap S(X_{t-1}, d_{t-1}) \cap \calC_{r_t}( \nabla f(X_{t-1}) ) \subseteq S_n$. 
Otherwise, we have $d_{t-1} < \lambda_{\overline{r}_{t-1}+1}(X_{t-1})$ and $\widehat{r}_t = \overline{r}_{t-1} +1$. Hence,
\[
  \lambda_i( X_{t-1}) \ge \lambda_{\widehat{r}_t  }(X_{t-1}) 
  = \lambda_{\overline{r}_{t-1}+1}(X_{t-1}) > d_{t-1}, \ \forall 
  \ i =1, \ldots, \widehat{r}_t.
\]
Consequently, the eigenvalues of $\widehat{Y}_t$ are nonnegative (i.e., $\widehat{Y}_t \succeq 0$)
and
\[
 \tr( \widehat{Y}_t ) = \sum_{i=1}^{ \widehat{r}_t } ( \lambda_i (X_{t-1}) - d_{t-1}  )
 + \Big( 1 + \widehat{r}_t d_{t-1} - \sum_{i=1}^{ \widehat{r}_t }  \lambda_i (X_{t-1}) \Big)
 = 1 .
\]
As in $Y_t$, the eigenvectors of $\widehat{Y}_{t}$ are the ones corresponding to the $\widehat{r}_t$
smallest eigenvalues of $\nabla f(X_{t-1})$.
Hence, we proved $\widehat{Y}_t \in \calC_{\widehat{r}_t}( \nabla f(X_{t-1}) ) \subseteq  S_n$.

We now prove \eqref{Optimality-Condition}. Once again, if $\widehat{Y}_t = Y_t$, then the 
result comes from the optimality condition of $Y_t$ being the optimal solution of 
subproblem \eqref{Subproblem}. We only consider the case $d_{t-1} < \lambda_{\overline{r}_{t-1}+1}(X_{t-1})$.
For simplicity, let $r := r_t$, $\widehat{r} := \widehat{r}_t$, $G :=\nabla f(X_{t-1})$ and $\lambda_i(X_{t-1}) = \lambda_i$.
It is important to note that $V$ and $V_t$ contain the orthonormal 
eigenvectors of $G$. 
We hence obtain
\[
\langle V_t, GV_t \rangle = \sum_{j=1}^{ \hat{r} } \lambda_{n-j+1} (G) ,\
\langle V, GV \rangle = \sum_{j=1}^{ {r} } \lambda_{n-j+1} (G) 
\ \mbox{and} \
G\bfv_n = \lambda_n(G) \bfv_n .
\] 

We then have
\begin{align*}
\langle G,\; Y_t - \widehat{Y}_t \rangle 
&= \langle G,\; Y_t  \rangle - \langle G,\; \widehat{Y}_t \rangle 
\label{Key-Inequality} \\
&= \langle V^T G V, \; \Lambda_{r} - d_{t-1} I_r \rangle 
-\langle V^T_t G V_t, \; \Lambda_{\widehat{r}} - d_{t-1} I_{\widehat{r}} \rangle 
\nonumber \\
&\quad  + \Big(
(r - \widehat{r}) d_{t-1} - \sum_{j=\widehat{r} +1 }^r \lambda_j
\Big) \langle G, \; \bfv_n \bfv_n^T \rangle 
\nonumber \\
&= \sum_{j=\widehat{r} +1 }^r \Big(
\lambda_j - d_{t-1}
\Big) \lambda_{n-j+1}(G) + \sum_{j=\widehat{r} +1 }^r (d_{t-1}-\lambda_j) \lambda_n(G) 
\nonumber \\
&= \sum_{j=\widehat{r} +1 }^r 
\underbrace{( \lambda_j - d_{t-1} )}_{\ge 0} 
\underbrace{( \lambda_{n-j+1}(G) - \lambda_n(G) )}_{\ge 0}
\ge 0 . \nonumber 
\end{align*}
This proves \eqref{Optimality-Condition}.
\end{proof}

\begin{remark}
We recall that the ideal situation is to compute $Y_t$, which enjoys the first-order optimality
condition and hence facilitates the convergence analysis. 
However, computing $Y_t$ would requires $r_t$, which costs $O(n^3)$.
We therefore propose its efficient rank $\widehat{r}_t$ and compute $\widehat{Y}_t$. 
This modification relaxes the feasibility requirement and only ensures $\widehat{Y}_t \in S_n$,
not necessarily in the feasible region $S_n \cap S(X_{t-1}, d_{t-1})$ of the subproblem 
\eqref{Subproblem}. It still satisfies the optimality condition that $Y_t$ enjoys. 
The reward for this relaxation is that the cost of computing it is reduced below $O((r^*)^2 n^2)$ as
we show below.
\end{remark} 

\subsection{Natural FW variant with ERSO}
\label{subsubsection-fast-rt}

In this subsection, we consider a natural baseline approach: directly incorporating the Efficient Rank Spectral Oracle (ERSO) $\widehat{Y}_t$ proposed in the previous section into the Frank--Wolfe framework by replacing the standard linear minimization oracle. 
This yields a new algorithm, as shown in Alg.~\ref{Alg:SO-rFW}, which we refer to as \textit{ERSO-FW}, and we investigate its theoretical properties.
%Unlike the polytope-constrained setting, where replacing the standard oracle with the simplex oracle leads to linear convergence \cite{wang2025simplex}, it turns out that SO-FW does not immediately enjoy such an improvement. 
%Nevertheless, 
We show that it achieves the same $O(1/\epsilon)$ convergence rate as the standard Frank--Wolfe method (see Theorem~~\ref{Thm-sublinear-SOFW}).

More interestingly, despite the lack of global linear convergence, the algorithm exhibits an inherent \emph{rank-adaptive} property (see Theorem~\ref{Thm-rank-adaptive}), which is not present in the classical spectral Frank--Wolfe method. 
In addition, when embedded into the Frank--Wolfe iterations, the Spectral Oracle admits a more efficient implementation, leading to an overall computational cost of $O(r^* n^2)$ per iteration.

\begin{algorithm}[htbp]
	\footnotesize
	\renewcommand{\algorithmicrequire}{\textbf{Input:}}
	\renewcommand{\algorithmicensure}{\textbf{Output:}}
	\caption{(ERSO-FW) Frank--Wolfe with Efficient-Rank Spectral Oracle}
	\label{Alg:SO-rFW}
	\begin{algorithmic}[1]
		\REQUIRE $X_0\in \spec_n$, initial lower bound $B_0\leq f^*$, step-sizes $\{\eta_t\}_{t\geq 1}\subseteq [0,1]$
		\STATE Set $d_0\gets\sqrt{\frac{2(f(X_0)-B_0)}{\mu}}$ and $\widehat{r}_0 = 0$.
		\FOR{$t=1,\dots$}
		\STATE Compute $\widehat{r}_t$ by \eqref{Update-rhat} and
		$\widehat{Y}_t = \calH(X_{t-1}, \nabla f(X_{t-1}), d_{t-1}, \widehat{r}_t)$ by \eqref{Yhat}.
		\STATE Working lower bound: $B_t^w\gets f(X_{t-1})+\langle\nabla f(X_{t-1}), \widehat{Y}_t-X_{t-1}\rangle$.
		\STATE $B_t\gets \max\{B_{t-1}, B_t^w \}$.
		\STATE $X_t\gets (1-\eta_t)X_{t-1}+\eta_t \widehat{Y}_t$ for some $\eta_t\in [0,1]$.
		\STATE $d_t\gets \sqrt{\frac{2(f(X_t)-B_t)}{\mu}}$.
		\ENDFOR
	\end{algorithmic}
\end{algorithm}

\begin{theorem}\label{Thm-sublinear-SOFW}
Using Algorithm \ref{Alg:SO-rFW} with step-size $\eta_t=\frac{2}{t+1}$ or the exact line-search, we have $X_t \in S_n$ and 
\[
    f(X_t)-f^*\leq f(X_t)-B_t\leq \frac{4L}{t+1}, \quad \forall \ t \ge 1 .
\]
\end{theorem}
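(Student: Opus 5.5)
The plan is the classical Frank--Wolfe argument applied to the gap $h_t := f(X_t)-B_t$. The one new ingredient is to check that the working lower bounds $B_t^w$ remain valid even though $\widehat{Y}_t$ comes from the nonconvex Spectral Oracle. I would prove two things by induction on $t$: (i) $X_t\in S_n$ and $B_t\le f^*$; (ii) the recursion $h_t\le(1-\eta_t)h_{t-1}+L\eta_t^2$.

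\textbf{Feasibility and validity of the lower bound.} Assume $X_{t-1}\in S_n$ and $B_{t-1}\le f^*$; at $t=1$ this is the input assumption $B_0\le f^*$.
\begin{itemize}
\item Lemma~\ref{Lemma-Yhat} gives $\widehat{Y}_t\in S_n$. Hence $X_t$, a convex combination of $X_{t-1}$ and $\widehat{Y}_t$, also lies in $S_n$.
\item Let $X^*\in\Xstar$ be a point nearest to $X_{t-1}$. Quadratic growth gives
\[
\|X_{t-1}-X^*\|_F^2\le \tfrac{2}{\mu}(f(X_{t-1})-f^*)\le \tfrac{2}{\mu}(f(X_{t-1})-B_{t-1})=d_{t-1}^2 .
\]
\item By Lemma~\ref{Lemma: Spectral-ball}(4), this yields $X^*\in S_n\cap S(X_{t-1},d_{t-1})$.
\item The optimality inequality \eqref{Optimality-Condition} of Lemma~\ref{Lemma-Yhat} then gives $\langle \nabla f(X_{t-1}),\widehat{Y}_t\rangle\le\langle\nabla f(X_{t-1}),X^*\rangle$.
\item Convexity of $f$ therefore gives
\[
B_t^w\le f(X_{t-1})+\langle\nabla f(X_{t-1}),X^*-X_{t-1}\rangle\le f^* .
\]
\end{itemize}
It follows that $B_t=\max\{B_{t-1},B_t^w\}\le f^*$, which also gives the first inequality of the theorem. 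This step is where the whole construction matters: the radius $d_{t-1}$ is chosen exactly so that the spectral ball captures an optimal point. I regard it as the main point to verify. It works because the Hoffman--Wielandt-based Lemma~\ref{Lemma: Spectral-ball}(4) has already been established.

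\textbf{Descent recursion.} Set $g_t:=\langle\nabla f(X_{t-1}),X_{t-1}-\widehat{Y}_t\rangle=f(X_{t-1})-B_t^w$.
\begin{itemize}
\item Since $B_t\ge B_t^w$ and $B_t\le f^*\le f(X_{t-1})$, we have $0\le f(X_{t-1})-B_t\le g_t$.
\item $L$-smoothness, together with $\|\widehat{Y}_t-X_{t-1}\|_F^2\le 2$ on $S_n$ (the Frobenius diameter of the spectrahedron is $\sqrt 2$), gives
\[
f(X_t)\le f(X_{t-1})-\eta_t g_t+L\eta_t^2 .
\]
\item Subtract $B_t$ from both sides and use $g_t\ge f(X_{t-1})-B_t\ge 0$, so that $-\eta_t g_t\le-\eta_t(f(X_{t-1})-B_t)$. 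This gives
\[
h_t\le(1-\eta_t)\bigl(f(X_{t-1})-B_t\bigr)+L\eta_t^2\le(1-\eta_t)h_{t-1}+L\eta_t^2 ,
\]
where the last step uses $B_t\ge B_{t-1}$.
\end{itemize}
For exact line search, the point $\widehat{Y}_t$ and the bound $B_t$ do not depend on $\eta_t$. The line-search value $f(X_t)$ is therefore no larger than the value obtained with $\eta_t=2/(t+1)$, so the same recursion holds with $\eta_t=2/(t+1)$ on the right-hand side.

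\textbf{Unrolling the recursion.} Take $\eta_t=2/(t+1)$.
\begin{itemize}
\item At $t=1$ we have $\eta_1=1$, so $h_1\le L\le 4L/2$, regardless of how large $h_0$ is.
\item Assume $h_{t-1}\le 4L/t$. Then
\[
h_t\le\frac{t-1}{t+1}\cdot\frac{4L}{t}+\frac{4L}{(t+1)^2}\le\frac{4L}{t+1},
\]
where the last inequality is equivalent to $(t-1)(t+1)+t\le t(t+1)$, i.e.\ $t^2+t-1\le t^2+t$, which always holds.
\end{itemize}
Combining this with $f(X_t)-f^*\le f(X_t)-B_t$ from the first step completes the proof.
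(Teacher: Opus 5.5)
Your proposal is correct and follows essentially the same route as the paper. You show by induction that $\widehat{Y}_t\in S_n$ and that $S(X_{t-1},d_{t-1})$ captures an optimal point (via quadratic growth and Lemma~\ref{Lemma: Spectral-ball}(4)), so that \eqref{Optimality-Condition} plus convexity gives $B_t\le f^*$; you then apply the smoothness recursion $f(X_t)-B_t\le\frac{t-1}{t+1}(f(X_{t-1})-B_{t-1})+\frac{4L}{(t+1)^2}$. The differences are cosmetic: the paper carries $S(X_t,d_t)\cap\Xstar\neq\emptyset$ as its induction invariant and closes by telescoping $t(t+1)(f(X_t)-B_t)$, whereas you re-derive the capture from $B_{t-1}\le f^*$ at each step and close by direct induction on $f(X_t)-B_t\le 4L/(t+1)$.
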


\begin{proof}
Since we start with $X_0 \in S_n$, Lemma~\ref{Lemma-Yhat} implies $\widehat{Y}_t \in S_n$ and hence
$X_t = (1 - \eta_t)X_{t-1} + \eta_t \widehat{Y}_t \in S_n$ for all $t\ge 1$ by induction.
	
We now claim that $S(X_t,d_t)\cap \Xstar\neq\emptyset$ and $B_t\leq f^*$. We prove this by induction.  First, we have 
    \begin{equation*}
        (\mu d_0^2)/2=f(X_0)-B_0\geq f(X_0)-f^*\geq \mu\dist^2(X_0,\Xstar)/2.
    \end{equation*}
    This implies that $\dist(X_0,\Xstar)\leq d_0$, and by Lemma \ref{Lemma: Spectral-ball}(4), we have $\Xstar\cap S(X_0,d_0)\neq\emptyset$. 
    Therefore, the claim holds for $t=0$. 
    
    Now suppose $S(X_{t-1},d_{t-1})\cap\Xstar\neq\emptyset$ and $B_{t-1}\leq f^*$.
    Picking any $X^*$ in this intersection, we obtain
    \begin{eqnarray}
        B_t^w
        &=& f(X_{t-1})+\langle\nabla f(X_{t-1}), \widehat{Y}_t-X_{t-1}\rangle \nonumber \\
        &\stackrel{\eqref{Optimality-Condition}}{=}& f(X_{t-1})+\langle\nabla f(X_{t-1}),Y_t-X_{t-1}\rangle 
          \nonumber \\
        &\leq&  f(X_{t-1})+\langle\nabla f(X_{t-1}),X^*-X_{t-1}\rangle
         \label{Important-Inequality} \\
        &\leq& f(X^*)=f^*, \nonumber
    \end{eqnarray}
    where the last inequality uses convexity of $f$.
    Thus $B_t=\max\{B_{t-1},B_t^w\}\leq f^*$.
    As in the case $t=0$, we have
    $\frac{\mu}{2}d_t^2\geq\frac{\mu}{2}\dist^2(X_t,\Xstar)$,
    and hence $\Xstar\cap S(X_t,d_t)\neq\emptyset$. 
    
    We now start to prove $f(X_t)-B_t\leq \frac{4L}{t+1}$. Let $\alpha_t = 2/(t+1)$, then for both the exact 
    line-search step or $\eta_t=2/(t+1)$, we have
    \begin{align*} \label{InEq-fxt}
       & f(X_t)
        \leq  f(X_{t-1}+\alpha_t ( \widehat{Y}_t-X_{t-1})) \\
        &\leq  f(X_{t-1})+\frac{2}{t+1} \langle \nabla f(X_{t-1}), \widehat{Y}_t-X_{t-1}\rangle+\frac{2L}{(t+1)^2}\lVert \widehat{Y}_t-X_{t-1} \rVert^2 
        \nonumber \\
        &\le  \left(
        1 - \frac 2{t+1}
        \right) f(X_{t-1})
        + \frac{2}{t+1}  \Big( f(X_{t-1}) +
        \langle \nabla f(X_{t-1}), \widehat{Y}_t-X_{t-1}\rangle
        \Big) +\frac{4L}{(t+1)^2}    \nonumber \\
        &=\frac{t-1}{t+1}f(X_{t-1})+\frac{2}{t+1}B_t^w+\frac{4L}{(t+1)^2} 
         \nonumber \\
        &\le \frac{t-1}{t+1}f(X_{t-1})+\frac{2}{t+1}B_t +\frac{4L}{(t+1)^2}
        \qquad (\mbox{because}\ B_t \ge B_t^w) 
        \nonumber \\
        &= \frac{t-1}{t+1}f(X_{t-1})+ B_t - \frac{t-1}{t+1}B_t +\frac{4L}{(t+1)^2}
         \nonumber  \\
        &\le \frac{t-1}{t+1}f(X_{t-1})+ B_t - \frac{t-1}{t+1}B_{t-1} +\frac{4L}{(t+1)^2} \qquad (\mbox{because}\ B_t \ge B_{t-1}) \nonumber
    \end{align*}
    where the third inequality used the fact that the diameter of $S_n$ is 
    $\sqrt{2}$ and $X_{t-1}$, $\widehat{Y}_t \in S_n$. 
% Since $B_t=\max\{B_{t-1}, B_t^w \}$, we have $f(X_t)-B_t\leq \frac{t-1}{t+1}(f(X_{t-1}-B_{t-1})+\frac{4L}{(t+1)^2}$ and thus
Therefore,
    \[
    \begin{aligned}
        t(t+1)(f(X_t)-B_t)&\leq t(t-1)(f(X_{t-1}-B_{t-1})+\frac{4tL}{t+1} \\
        &\leq t(t-1)(f(X_{t-1}-B_{t-1})+4L.
    \end{aligned}
    \]
    Summing up from $i=1$ to $t$, we have
    \[
        t(t+1)(f(X_t)-B_t)\leq 4tL,
    \]
    which concludes that $f(X_t)-B_t\leq \frac{4L}{t+1}$.
\end{proof}

The following theorem establishes the \emph{rank-adaptive property} of the proposed algorithm. 
It shows that throughout the iterations, the number of eigenvalues computed at each step never exceeds $r^*$, 
and eventually stabilizes at $r^*$ after a finite number of iterations. 
This property is particularly desirable in practice, as the true rank $r^*$ is often unknown a priori. 
Our algorithm therefore does not require any prior estimate of $r^*$ as a hyperparameter, while automatically adapting to it during the optimization process.

% 下面的定理说明了，我们迭代过程中需要计算的特征值个数，始终不会超过r^*，并且当迭代不断进行，最终会始终等于r^*。这是非常好的事情，因为在实际问题中，r^*的值往往是未知的，我们的算法并不需要对r^*的先验来确定超参数，同时还能在运行过程中得到对r^*的估计。

\begin{theorem}\label{Thm-rank-adaptive}
Concerning the efficient rank sequence $\{ \widehat{r}_t\}$ and the thresholding rank sequence 
$\{r_t\}$, we have 
(i) $\widehat{r}_t \le r_t \le r^*$ for all $t \ge 0$;
(ii) $r_t = r^*$ for all $t \geq T_0:= \frac{72L}{\mu \lambda_{r^*}^2}$; and
(iii) $\widehat{r}_t = r_t= r^*$ for all $t  \geq T_0' := T_0 + r^*$.
\end{theorem}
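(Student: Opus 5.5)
The plan is to reuse two facts established inside the proof of Theorem~\ref{Thm-sublinear-SOFW}. First, for every $t$ we have $S(X_{t-1},d_{t-1})\cap\Xstar\neq\emptyset$ and $d_{t-1}\ge \dist(X_{t-1},\Xstar)$. Second, $d_t^2=\frac{2}{\mu}(f(X_t)-B_t)\le \frac{8L}{\mu(t+1)}$. Part (i) uses only the first fact. Parts (ii) and (iii) combine the decay of $d_t$ with eigenvalue perturbation bounds.

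\textbf{Part (i).} Fix $t$, write $X=X_{t-1}$, $d=d_{t-1}$, and pick $X^*\in S(X,d)\cap\Xstar$.
\begin{itemize}
\item By the definition of the spectral ball, $X^*=Q\diag(\bm\mu)Q^T$ with $\bm\mu=\bflambda(X)-d\bfone_n+nd\bflambda$ for some $\bflambda\in\simp_n$.
\item Hence $\mu_i\ge\lambda_i(X)-d$ for every $i$. This is componentwise and needs no sortedness of $\bm\mu$.
\item For each of the $r_t$ indices with $\lambda_i(X)>d$ we get $\mu_i>0$. So $X^*$ has at least $r_t$ positive eigenvalues, and $r_t\le \mathrm{rank}(X^*)=r^*$.
\end{itemize}
For $\widehat r_t\le r_t$ I split according to \eqref{Update-rhat}.
\begin{itemize}
\item If $d\ge\lambda_{\bar r_{t-1}+1}(X)$, then no index beyond $\bar r_{t-1}$ has $\lambda_i(X)>d$. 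The restricted maximum then equals $r_t$, so $\widehat r_t=r_t$.
\item Otherwise $\lambda_{\bar r_{t-1}+1}(X)>d$, so $r_t\ge\bar r_{t-1}+1=\widehat r_t$.
\end{itemize}
As a by-product, $\bar r_t\le r^*$ for all $t$.

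\textbf{Part (ii).} It remains to show $r_t\ge r^*$, i.e.\ $\lambda_{r^*}(X_{t-1})>d_{t-1}$, for large $t$.
\begin{itemize}
\item Take $X^*$ to be the projection of $X_{t-1}$ onto $\Xstar$.
\item Weyl's inequality (or Hoffman--Wielandt) gives $\lambda_{r^*}(X_{t-1})\ge\lambda_{r^*}(X^*)-\|X_{t-1}-X^*\|_F\ge \lambda_{r^*}-d_{t-1}$.
\item So it suffices that $d_{t-1}<\lambda_{r^*}/2$. By the bound on $d_{t-1}$, this holds once $\frac{8L}{\mu t}<\frac{\lambda_{r^*}^2}{4}$, i.e.\ $t>32L/(\mu\lambda_{r^*}^2)$.
\item The stated $T_0=72L/(\mu\lambda_{r^*}^2)$ leaves slack for the index shift $t$ versus $t-1$, and for a possibly different split such as $d<\lambda_{r^*}/3$. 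I would fix the constants at the end.
\item Combined with (i), this gives $r_t=r^*$ for $t\ge T_0$.
\end{itemize}

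\textbf{Part (iii).} This is a counting argument on $\bar r$ for $t\ge T_0$, using $r_t=r^*$ there.
\begin{itemize}
\item Suppose $\bar r_{t-1}<r^*$. Then $\lambda_{\bar r_{t-1}+1}(X_{t-1})\ge\lambda_{r^*}(X_{t-1})>d_{t-1}$, so the second branch fires and $\widehat r_t=\bar r_{t-1}+1$. Thus $\bar r$ strictly increases.
\item Since $\bar r\le r^*$, after at most $r^*$ such iterations we have $\bar r_{t-1}=r^*$.
\item From then on, $r_t=r^*$ gives $\lambda_{r^*+1}(X_{t-1})\le d_{t-1}$. So the first branch fires with $\widehat r_t=\max\{i\in[r^*]:\lambda_i>d\}=r^*$.
\item This holds for all $t\ge T_0+r^*$.
\end{itemize}

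\textbf{Main obstacle.} I expect the main obstacle to be the bookkeeping in (ii): matching the constant $72$ and the off-by-one between $d_{t-1}$ and $t$. The structural arguments in (i) and (iii) are short once the componentwise lower bound from the simplex-ball representation is noted.
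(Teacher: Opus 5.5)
Your proof is correct and follows the paper's overall structure: a rank bound, then $r_t\ge r^*$ once $d_{t-1}$ is small, then a counting argument on $\bar r_t$. The differences are local.

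\textbf{Part (i).} For $r_t\le r^*$, the paper never uses the ball invariant. It takes the projection $X^*$ of $X_{t-1}$ onto $\Xstar$ and applies Weyl's inequality at index $r^*+1$. This gives $\lambda_{r^*+1}(X_{t-1})\le\|X_{t-1}-X^*\|_F\le d_{t-1}$ directly from $B_{t-1}\le f^*$ and quadratic growth. You instead use $S(X_{t-1},d_{t-1})\cap\Xstar\neq\emptyset$ together with the componentwise bound $\mu_i\ge\lambda_i(X_{t-1})-d_{t-1}$ from the simplex-ball parametrization. This is the same perturbation fact repackaged, since the invariant was itself obtained via Hoffman--Wielandt. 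Its advantage is that it states the bound in the oracle's own geometry: the thresholding rank never exceeds the rank of any optimal point inside the current ball. The paper's version is more economical, because the same projection and the same Weyl step are reused in (ii).

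\textbf{Part (ii).} The paper splits at $d_{t-1}\le\lambda_{r^*}/3$, i.e.\ $8L/(\mu t)\le\lambda_{r^*}^2/9$, which is where the constant $72$ comes from. Your split at $\lambda_{r^*}/2$ gives the smaller threshold $32L/(\mu\lambda_{r^*}^2)$. Your estimate $d_{t-1}^2\le 8L/(\mu t)$ already absorbs the index shift. So the ``bookkeeping obstacle'' you anticipate does not arise, and no constants need fixing.

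\textbf{Part (iii).} The paper argues by cases: if the first branch of \eqref{Update-rhat} ever fires after $T_0$, the rank locks; otherwise $\bar r_t$ increments. You observe that for $t\ge T_0$ the first branch cannot fire while $\bar r_{t-1}<r^*$, because $\lambda_{\bar r_{t-1}+1}(X_{t-1})\ge\lambda_{r^*}(X_{t-1})>d_{t-1}$. This is the same counting argument, made more explicit.
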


\begin{proof}
    We first establish that $r_t \leq r^*$ for all $t \in \mathbb{N}$. Let $X^* = \argmin_{Y \in \Xstar} \lVert Y - X_{t-1} \rVert$. Then,
    \begin{align} \label{Bound-dt}
        d_{t-1}&=\sqrt{\frac{2(f(X_{t-1})-B_{t-1})}{\mu}} 
        \ge \sqrt{\frac{2(f(X_{t-1})-f^*)}{\mu}} \\
        &\geq \dist(X_{t-1},\Xstar) = \lVert X_{t-1}-X^*\rVert_F \nonumber \\
        &\geq \lvert \lambda_{r^*+1}(X_{t-1}) - \lambda_{r^*+1}(X^*) \rvert \nonumber \\
        &= \lambda_{r^*+1}(X_{t-1}), \nonumber
    \end{align}
where the first inequality used the fact $B_t$ is a lower bound for $f^*$,
     the second inequality is due to the QG condition and 
     the third is Weyl's inequality.
  %  By definition of $r_t$,
  The thresholding rank $r_t$ means that
 \be\label{Def-rt}
        \lambda_1(X_{t-1}) \geq \cdots \geq \lambda_{r_t}(X_{t-1}) > d_{t-1} \geq \lambda_{r_t+1}(X_{t-1}) \geq \cdots \lambda_n(X_{t-1}).
 \ee
 This, together with \eqref{Bound-dt}, implies $r_t < r^*+1$. Consequently, $r_t\leq r^*$ 
 for all $t\ge 0$. Now we recall the definition of the efficient rank $\widehat{r}_t$. 
 For the case $d_{t-1} \ge \lambda_{\overline{r}_{t-1} +1 }(X_{t-1})$, $\widehat{r}_t = r_t$. 
 Hence $\widehat{r}_t \le r_t$ for this case. We consider the other case where 
  $d_{t-1} < \lambda_{\overline{r}_{t-1} +1 }(X_{t-1})$. Comparing with the condition
\eqref{Def-rt}, we must have $\overline{r}_{t-1} + 1 \le r_t$, i.e., 
$\widehat{r}_t = \overline{r}_{t-1} +1 \le r_t$. In both cases, we proved
$\widehat{r}_t \le r_t \le r^*$.
    
Next, we prove parts (ii) and (iii) of Theorem~\ref{Thm-rank-adaptive}
in two steps. First, we prove that $r_t = r^*$ for $t \ge T_0$.
%For $t \geq T_0$  %\frac{72L}{\mu \lambda_{r^*}^2}$,
Thm.~\ref{Thm-sublinear-SOFW} implies
    \begin{equation}\label{Eq-proof-rank-adapative-1}
        d_{t-1}=\sqrt{\frac{2}{\mu}(f(X_{t-1})-B_{t-1})}\leq \sqrt{\frac{2}{\mu}\cdot\frac{4L}{t}}\leq \frac{\lambda_{r^*}}{3}.
    \end{equation}
It follows from \eqref{Bound-dt} that
%    Following a similar argument in the first part, we obtain 
    \begin{equation}\label{Eq-proof-rank-adapative-2}
        d_{t-1} \ge \| X_{t-1} - X^*\|_F
        \geq \lvert\lambda_{r^*}(X_{t-1})-\lambda_{r^*}(X^*) \rvert = \lvert\lambda_{r^*}(X_{t-1})-\lambda_{r^*} \rvert.
    \end{equation}
    Combining \eqref{Eq-proof-rank-adapative-1} with \eqref{Eq-proof-rank-adapative-2}, we deduce
    \[
        \lambda_{r^*}(X_{t-1})
        \ge \lambda_{r^*} - d_{t-1} 
        \ge \lambda_{r^*} - \frac 13 \lambda_{r^*}
        = \frac{2}{3}\lambda_{r^*}>\frac{\lambda_{r^*}}{3}\geq d_{t-1}.
    \]
% Once again, by \eqref{Def-rt}
%    \[
%        \lambda_1(X_{t-1})\geq \cdots \geq \lambda_{r_t}(X_{t-1})>d_{t-1}\geq \lambda_{r_t+1}(X_{t-1})\geq\cdots \lambda_n(X_{t-1}),
%    \]
 Once again, it follow from \eqref{Def-rt}
  that $r_t \geq r^*$. Since $r_t \leq r^*$ from the first part, we conclude that $r_t = r^*$ for all $t \geq T_0$. %\frac{72L}{\mu \lambda_{r^*}^2}$.
  
The second step is to prove for $t \ge T_0' = T_0 + r^*$, we will have $\widehat{r}_t = r^*$.
Let $t_0 \ge T_0$. We have $r_{t_0}=r^*$ from the first part. 
Suppose at $X_{t_0-1}$, the condition $d_{t_0-1}\geq \lambda_{\overline{r}_{t_0-1}+1}(X_{t_0-1})$ holds.
Then by the update rule of $\widehat{r}_t$, we have $\widehat{r}_{t_0} =r_{t_0} = r^*$. Consequently,
$\overline{r}_{t_0} = \max\{\widehat{r}_0, \cdots, \widehat{r}_{t_0}\} =r^*$.
Since $\{\overline{r}_t \}$ is non-decreasing, we must have $\overline{r}_t = \overline{r}_{t_0} = r^*$ for all $t > t_0$. Furthermore, $\widehat{r}_t \ge \min\{r_t, \overline{r}_{t-1}\} = r^*$ for
all $t> t_0$. Since we have proved $\widehat{r}_t \le r^*$ for all $t$, we have
$\widehat{r}_t = r^*$ for all $t\ge t_0$. What we just proved is the following: if the condition $d_{t_0-1}\geq \lambda_{\overline{r}_{t_0-1}+1}(X_{t_0-1})$ holds once at some $t_0 \ge T_0$, then $\widehat{r}_t = r^*$ for all $t \ge t_0$. The remaining case is that the condition does 
not hold at all within the range $T_0 \le t \le T_0'$. But the update rule of $\widehat{r}_t$ would increase the rank bound $\overline{r}_t$ by one each time this condition does not hold. This means
that it cannot happen more than $r^*$ times because $\widehat{r}_t \le r^*$ for all $t$. 
Combining the two cases we just considered proves that $\widehat{r}_t = r^*$ for all
$t \ge T_0'$. This proves our claim.
\end{proof}

\begin{remark} \label{Remark-EfficientRank}
(i) When computing the efficient rank $\widehat{r}_t$ by
\eqref{Update-rhat}, 
it requires computing only the largest $(\overline{r}_{t-1} + 1)$ eigenvalues of $X_{t-1}$ once. 
Since $\overline{r}_t = \max\{\widehat{r}_0, \dots, \widehat{r}_t\} \leq r^*$ by Thm.~\ref{Thm-rank-adaptive}, 
the overall computational complexity of computing $\widehat{r}_t$ is $O(r^* n^2)$, which is comparable to that of the standard Frank–Wolfe oracle.
(ii) Although $\overline{r}_t$ is non-decreasing, $\widehat{r}_t$ is not.
It may decrease or increase depending whether the condition
$d_{t-1}\geq \lambda_{\overline{r}_{t-1}+1}(X_{t-1})$ occurs or not.
If it happens, $\widehat{r}_t = r_t$ and the thresholding rank $r_t$ may decrease. 
Our numerical experiments confirm this behaviour.
\end{remark}
%\paragraph{Computational cost of Algorithm~\ref{Alg:SO-rFW}.}
%In each iteration, Algorithm~\ref{Alg: compute_rt} requires computing only the largest $(\bar{r}_{t-1} + 1)$ eigenvalues of $X_{t-1}$ once. 
%Since Theorem~\ref{Thm-rank-adaptive} relies solely on Theorem~\ref{Thm-sublinear-SOFW}, we still have $\bar{r}_t = \max\{r_1, \dots, r_t\} \leq r^*$. 
%Consequently, the overall computational complexity of Algorithm~\ref{Alg: compute_rt} is $O(r^* n^2)$, which is comparable to that of the standard Frank–Wolfe oracle. 

%%%%----------------------------------
%\subsection{Linearly convergent and rank-adaptive spectral oracle variant}
\subsection{Linearly convergent spectral Simplex-FW (S-SFW)}
% 在这一subsection，我们希望在继承上一个算法的rank adaptive 性质的基础上，进一步改进使得算法可以线性收敛。改进的基本想法是借鉴Frank-Wolfe的fully-Corrective variant \cite{jaggi2013revisiting} 和\cite{ding2020spectral}提出的Spectral Frank-Wolfe,当通过Spectral oracle计算出秩r_t后，求解一个r_t维的子问题，算法细节如Alg.~\ref{Alg: SO-FW-FC}所示。
In this subsection, we further enhance the proposed rank-adaptive framework to achieve \emph{linear convergence}. 
The main idea builds upon the rank-adaptive ERSO-FW algorithm introduced in the previous subsection, while incorporating the fully-corrective strategy of the Frank–Wolfe method~\cite{jaggi2013revisiting,ding2020spectral}. 

Specifically, after obtaining the efficient rank $\widehat{r}_t$ through the Spectral Oracle, we solve a low-dimensional subproblem of size $\widehat{r}_t$ to refine the update direction. 
This local correction step effectively reduces the optimization residual within the subspace spanned by the current eigenvectors, leading to improved convergence behavior without sacrificing the algorithm’s rank adaptivity. 
The detailed procedure of the proposed linearly convergent variant is summarized in Algorithm~\ref{Alg: SO-FW-FC}.

\begin{algorithm}[htbp]
\footnotesize
\renewcommand{\algorithmicrequire}{\textbf{Input:}}
\renewcommand{\algorithmicensure}{\textbf{Output:}}
\caption{(S-SFW) Rank-adaptive linearly convergent Frank--Wolfe with Spectral Oracle}
\label{Alg: SO-FW-FC}
\begin{algorithmic}[1]
    \REQUIRE $X_0\in \spec_n$, initial lower bound $B_0\leq f^*$.
    \STATE Set $d_0\gets\sqrt{\frac{2(f(X_0)-B_0)}{\mu}}$ and $\widehat{r}_0 = 0$.
    \FOR{$t=1,\dots$}
        \STATE Compute $\widehat{r}_t$ by \eqref{Update-rhat} and
        $\widehat{Y}_t = \calH(X_{t-1}, \nabla f(X_{t-1}), d_{t-1}, \widehat{r}_t)$ by \eqref{Yhat} and $V_t$ in \eqref{Yhat-V}.
        \STATE Working lower bound: $B_t^w\gets f(X_{t-1})+\langle\nabla f(X_{t-1}), \widehat{Y}_t-X_{t-1}\rangle$.
        \STATE $B_t\gets \max\{B_{t-1}, B_t^w \}$.
        \STATE Solve $\min_{S\in \spec_{\widehat{r}_t},\eta\in [0,1]} f((1-\eta)X_{t-1}+\eta V_tSV_t^T)$ to obtain $(S_t, \eta_t)$.
        \STATE $X_t\gets (1-\eta_t)X_{t-1}+\eta_tV_tS_tV_t^T$.
        \STATE $d_t\gets \sqrt{\frac{2(f(X_t)-B_t)}{\mu}}$.
    \ENDFOR
\end{algorithmic}
\end{algorithm}

We now state the theoretical guarantees for Algorithm~\ref{Alg: SO-FW-FC}. 

\begin{theorem} \label{Thm-SSFW-linear}
Using Algorithm~\ref{Alg: SO-FW-FC},  we have for all $t$,
\[
    f(X_t)-f^*\leq \frac{4L}{t+1}.
\]
Moreover, it holds that $r_t = r^*$ for all $t \geq T_0$ and $\widehat{r}_t = r^*$ for all $t \ge T_0'$.
Define $T_1:=\frac{72L^3}{\mu\delta^2}$, then for $t \geq \max\{T_0', T_1\}$, we further have
\[
     f(X_t) - f^* 
    \leq \left( 1 - \min \left\{ \frac{\mu}{8L}, \frac{\delta}{12L} \right \} \right)\, 
    (f(X_{t-1}) - f^*).
\]
\end{theorem}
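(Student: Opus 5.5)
The plan has three steps. First, reduce the sublinear bound and the rank claims to the ERSO-FW analysis. Second, prove a one-step contraction once $\widehat r_t=r^*$ and the gradient eigengap is visible. Third, assemble the two regimes of that contraction into the stated rate.

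\textbf{Sublinear bound and rank identification.} The fully corrective step of Algorithm~\ref{Alg: SO-FW-FC} is at least as good as the FW step. The choice $S = S'$ with $V_tS'V_t^T=\widehat{Y}_t$ is feasible for the subproblem, because Lemma~\ref{Lemma-Yhat} gives $\widehat{Y}_t\in\calC_{\widehat r_t}(\nabla f(X_{t-1}))$. So is $\eta=2/(t+1)$. Hence $f(X_t)\le f((1-\alpha_t)X_{t-1}+\alpha_t\widehat Y_t)$. The induction in the proof of Theorem~\ref{Thm-sublinear-SOFW} then goes through verbatim: it shows $B_t\le f^*$, that $S(X_t,d_t)$ meets $\Xstar$, and that $f(X_t)-B_t\le 4L/(t+1)$. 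This yields the first claim. The proof of Theorem~\ref{Thm-rank-adaptive} used only this bound, the QG condition and Weyl's inequality, so it transfers unchanged and gives $r_t=r^*$ for $t\ge T_0$ and $\widehat r_t=r^*$ for $t\ge T_0'$.

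\textbf{Contraction step.} Fix $t\ge\max\{T_0',T_1\}$, write $G=\nabla f(X_{t-1})$, and let $X^*$ be the projection of $X_{t-1}$ onto $\Xstar$. By QG and the sublinear bound, $\|X_{t-1}-X^*\|_F^2\le 8L/(\mu t)$. Combining Weyl's inequality with $\|G-\nabla f(X^*)\|\le L\|X_{t-1}-X^*\|_F$, the gap $\lambda_{n-r^*}(G)-\lambda_{n-r^*+1}(G)$ is at least $\delta-2L\|X_{t-1}-X^*\|_F\ge\delta/3$ once $t\ge T_1$. This is where the constant $72L^3/(\mu\delta^2)$ should come from, possibly with $L\ge\mu$ absorbed. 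Since $\widehat r_t=r^*$, the set $\{V_tSV_t^T:S\in S_{r^*}\}$ equals $\calC_{r^*}(G)$. Lemma~\ref{Lemma-Ding} with $Y=G$ and $X=X_{t-1}$ then gives $W\in\calC_{r^*}(G)$ with
\[
\langle X_{t-1}-W,G\rangle\ge \tfrac{\delta}{6}\|X_{t-1}-W\|_F^2 .
\]
I would instead want such a $W$ that is also comparable to the FW gap $\langle X_{t-1}-X^*,G\rangle\ge f(X_{t-1})-f^*$. The most natural route is to note that the minimizer over $\calC_{r^*}(G)$ already beats $\langle G,X^*\rangle$ up to a term of order $\delta^{-1}$ times the squared distance. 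Alternatively, one can apply Lemma~\ref{Lemma-Ding} in the form used in \cite{ding2020spectral}, comparing $W$ against $X^*$ directly.

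Fully corrective optimality gives, for every $\eta\in[0,1]$,
\[
f(X_t)-f^*\le f(X_{t-1})-f^*-\eta\langle X_{t-1}-W,G\rangle+\tfrac{L\eta^2}{2}\|X_{t-1}-W\|_F^2 .
\]
The goal is to bound $\langle X_{t-1}-W,G\rangle\ge \tfrac12 h_{t-1}$ with $h_{t-1}:=f(X_{t-1})-f^*$, and also $\|X_{t-1}-W\|_F^2\lesssim \|X_{t-1}-X^*\|_F^2+\delta^{-1}\langle\cdot\rangle\lesssim h/\mu+\ldots$.

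\textbf{Combining the two regimes and the main obstacle.} Optimizing over $\eta$ gives two regimes. If the step $\eta=1$ is feasible, the decrease is governed by $\delta/L$. Otherwise the decrease is a quadratic-growth-type one governed by $\mu/L$. Together they yield the factor $1-\min\{\mu/(8L),\delta/(12L)\}$. The main obstacle is the construction of $W$ in the contraction step: I need a single $W\in\calC_{r^*}(G)$ whose inner-product decrease is at least a constant times $h_{t-1}$ while $\|X_{t-1}-W\|_F^2$ stays $O(h_{t-1}/\mu+h_{t-1}/\delta)$. I would try to obtain it by adapting the proof of Lemma~\ref{Lemma-Ding}, taking $W$ as a rescaled projection of $X^*$ onto the bottom-$r^*$ eigenspace of $G$, where the $\delta/3$ gap controls the leakage of $X^*$ outside that eigenspace.
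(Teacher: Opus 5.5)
Your first step matches the paper: the fully corrective step is at least as good as the FW step with $\widehat Y_t$, so the sublinear bound and rank identification carry over from Theorems~\ref{Thm-sublinear-SOFW} and~\ref{Thm-rank-adaptive}. Your eigengap computation also matches, and it gives exactly $T_1=72L^3/(\mu\delta^2)$ with no need to absorb $L\ge\mu$.

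The gap is the contraction step, which you leave open as the ``main obstacle''. Applying Lemma~\ref{Lemma-Ding} at $X=X_{t-1}$ is the wrong instantiation. Its conclusion $\langle X_{t-1}-W,G\rangle\ge\frac{\delta}{6}\|X_{t-1}-W\|_F^2$ says nothing about $h_{t-1}$. For example, if $X_{t-1}\in\calC_{r^*}(G)$, then $W=X_{t-1}$ satisfies it with both sides zero while $h_{t-1}>0$. That is why you end up needing a $W$ with $\|X_{t-1}-W\|_F^2=O(h_{t-1}/\mu+h_{t-1}/\delta)$. You never establish that bound, and the proof does not need it.

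The missing idea is to apply Lemma~\ref{Lemma-Ding}, with gap $\delta/3$, at $X=X^*$, the projection of $X_{t-1}$ onto $\Xstar$. This gives $W_t\in\calC_{r^*}(G)$ with $\langle X^*-W_t,G\rangle\ge\frac{\delta}{6}\|X^*-W_t\|_F^2$. Then split the linear term through $X^*$, using convexity for the first piece:
\[
\langle W_t-X_{t-1},G\rangle=\langle X^*-X_{t-1},G\rangle+\langle W_t-X^*,G\rangle\le -h_{t-1}-\tfrac{\delta}{6}\|X^*-W_t\|_F^2 .
\]
Split the quadratic term the same way, using QG:
\[
\|W_t-X_{t-1}\|_F^2\le 2\|W_t-X^*\|_F^2+2\|X_{t-1}-X^*\|_F^2\le 2\|W_t-X^*\|_F^2+\tfrac{4}{\mu}h_{t-1}.
\]
Substituting both into your descent inequality gives
\[
h_t\le\Bigl(1-\eta+\tfrac{2L\eta^2}{\mu}\Bigr)h_{t-1}+\Bigl(L\eta^2-\tfrac{\delta\eta}{6}\Bigr)\|X^*-W_t\|_F^2 .
\]
The term $\|X^*-W_t\|_F^2$ is never bounded by $h_{t-1}$. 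It is removed by taking $\eta\le\delta/(6L)$, so its coefficient is nonpositive.

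The stated rate then comes from a case split on $\mu$ versus $\delta$, not on whether $\eta=1$ is admissible:
\begin{itemize}
\item If $\mu\le\frac23\delta$, take $\eta=\mu/(4L)$, which gives the factor $1-\frac{\mu}{8L}$.
\item If $\mu>\frac23\delta$, take $\eta=\delta/(6L)$, which gives the factor $1-\frac{\delta}{12L}$.
\end{itemize}
Your remarks about ``comparing $W$ against $X^*$ directly'' and ``a rescaled projection of $X^*$'' point toward this, but Lemma~\ref{Lemma-Ding} already covers it without adapting its proof. As written, the proposal leaves the key inequality unproved.
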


\begin{proof}
For simplicity, let $\calC_t := \calC_{\widehat{r}_t} ( \nabla f(X_{t-1}) )$. 
By \eqref{Yhat-Sn}, $\widehat{Y}_t \in \calC_t \subseteq S_n$.
The optimality condition on $(S_t, \eta_t)$ implies
\begin{eqnarray} 
	f(X_t) &=& f( (1-\eta_t) X_{t-1} + \eta_t V_t S_t V_t^T  )  \label{fxt}  \\
	&=& f(X_{t-1}  + \eta_t ( V_t S_t V_t^T - X_{t-1}  ) ) \nonumber \\
	&\le& f(X_{t-1}  + \eta ( W - X_{t-1}  ) ) \qquad \forall \ 
	\eta \in [0,1], \ W \in \calC_t . \nonumber 
\end{eqnarray}
Since $\widehat{Y}_t \in \calC_t$, we have
\[
 f(X_t) \le f(X_{t-1}  + \alpha_t (  \widehat{Y}_t- X_{t-1}  ) ).
\]
This recovers the first inequality below \eqref{Important-Inequality} and we can follow from there in the proof of Thm.~\ref{Thm-sublinear-SOFW} to establish the first bound. Once this bound is valid, the proof of Thm.~\ref{Thm-rank-adaptive} goes through line-by-line and we can establish that
$r_t = r^*$ for $t\ge T_0$ and $\widehat{r}_t = r^*$ for all $t \ge T_0'$.

%    \begin{equation}\label{Eq-proof-SOFW-linear-1}
%        f(X_t) \leq f(X_{t-1})+\eta\langle W-X_{t-1},\nabla f(X_{t-1}) \rangle + \frac{L\eta^2}{2}\|W-X_{t-1} \|_F^2.
%    \end{equation}
%    % hold for both Option 1 and 2.
%    Since $Y_t \in \mathcal{C}_t$, following the same argument as in Theorems~\ref{Thm-sublinear-SOFW} and~\ref{Thm-rank-adaptive}, we obtain that
%    \[
%        f(X_t)-f^* \leq \frac{4L}{t+1}, \quad \forall\, t \geq 1.
%    \]
%    Furthermore, when $t \geq T_0 := \tfrac{72L}{\mu \lambda_{r^*}^2}$, we have $r_t = r^*$. 
%   
Once we have the sublinear bound proved above, the rest is just a straightforward extension of
the proof techniques developed in \cite{ding2020spectral}. For completeness, we include
the details below. 
    Let $X^* = \argmin_{Y \in \Xstar} \lVert Y - X_{t-1} \rVert$. Then, for all $t> T_1$, we have
    \begin{align*}
        & \quad  \lVert \nabla f(X_{t-1})-\nabla f(X^*)\rVert_F 
        \leq L\lVert X_{t-1}-X^*\rVert_F \\
        &\leq L\left(\frac{2(f(X_{t-1})-f^*)}{\mu} \right)^{\frac{1}{2}} 
        \leq \sqrt{\frac{8L^3}{\mu t}}\leq\frac{\delta}{3},
    \end{align*}
    where we used $L$-smoothness, the QG condition and the bound just established.
    Hence, for $t \geq \max\{T_0', T_1\}$, 
    \begin{align*}
        &\lambda_{n-\widehat{r}_t}(\nabla f(X_{t-1})) - \lambda_{n-\widehat{r}_t+1}(\nabla f(X_{t-1})) \\
        =&\lambda_{n-r^*}(\nabla f(X_{t-1})) - \lambda_{n-r^*+1}(\nabla f(X_{t-1})) \\
        =& \underbrace{\lambda_{n-r^*}(\nabla f(X^*)) - \lambda_{n-r^*+1}(\nabla f(X^*))}_{=\delta} 
        + \underbrace{(\lambda_{n-r^*}(\nabla f(X_{t-1})) - \lambda_{n-r^*}(\nabla f(X^*)))}_{\geq -\frac{1}{3}\delta} \\
        &+ \underbrace{\lambda_{n-r^*+1}(\nabla f(X^*)) - \lambda_{n-r^*+1}(\nabla f(X_{t-1}))}_{\geq -\frac{1}{3}\delta} \\
        \geq & \frac{1}{3}\delta,
    \end{align*}
    where the lower bounds on the last two terms follow from Weyl's inequality.
    From \eqref{fxt}, it follows that for any $\eta \in [0,1]$ and $W \in \calC_t$,
    \begin{align*}
        & f(X_t)-f^* \le f( X_{t-1} + \eta ( W - X_{t-1})  ) \\
        \leq & (f(X_{t-1})-f^*)+\eta \langle W-X_{t-1},\nabla f(X_{t-1})\rangle 
        % &\qquad 
        + \tfrac{L\eta^2}{2}\lVert W-X_{t-1} \rVert_F^2 \\
        \leq & (f(X_{t-1})-f^*) + \eta \langle X^*-X_{t-1},\nabla f(X_{t-1})\rangle\\
        &\ + \eta \langle W-X^*,\nabla f(X_{t-1})\rangle 
         + L\eta^2\bigl(\lVert X^*-W \rVert_F^2 + \lVert X_{t-1}-X^* \rVert_F^2\bigr) \\
        \leq & (1-\eta)(f(X_{t-1})-f^*)+\eta \langle W-X^*,\nabla f(X_{t-1})\rangle \\
        &\ + L\eta^2\bigl(\lVert X^*-W \rVert_F^2 + \lVert X_{t-1}-X^* \rVert_F^2\bigr) \\
        \leq & \Bigl(1-\eta + \frac{2L\eta^2}{\mu}\Bigr)(f(X_{t-1})-f^*) 
        +\eta \langle W-X^*,\nabla f(X_{t-1})\rangle + L\eta^2\lVert X^*-W \rVert_F^2.
    \end{align*}
    Here we used $(a+b)^2\le 2a^2+2b^2$, convexity of $f$, and the QG condition.
Note that the above bound holds for any $W \in \calC_t$ and Lemma~\ref{Lemma-Ding} holds for 
$Y=\nabla f(X_{t-1})$. There exists $W_t \in \mathcal{C}_t$ such that
    \[
        \langle W_t-X^*,\nabla f(X_{t-1}) \rangle \leq -\frac{\delta}{6}\lVert X^*-W_t \rVert_F^2.
    \]
    Combining these results gives
    \begin{align*}
        & f(X_t)-f^* 
        \leq \Big(1-\eta + \frac{2L\eta^2}{\mu} \Big)(f(X_{t-1})-f^*) 
        + \Big( L\eta^2-\frac{\delta\eta}{6} \Big) \lVert X^*-W_t \rVert_F^2.
    \end{align*}
    It suffices to choose $\eta$ such that 
    \[
        1 - \eta + \tfrac{2L \eta^2}{\mu} < 1 
        \quad \text{and} \quad
        L \eta^2 - \tfrac{\delta \eta}{6} \le  0.
    \]
They are equivalent to
\[
  \mu > 2 L\eta \quad \mbox{and} \quad \delta \ge 6L \eta.
\]
The following choices satisfy those conditions.
    When $\mu \leq \tfrac{2}{3}\delta$, setting $\eta = \tfrac{\mu}{4L}$ yields
    \[
        f(X_t) - f^* \leq \big(1 - \tfrac{\mu}{8L}\big)(f(X_{t-1}) - f^*).
    \]
    When $\mu > \tfrac{2}{3}\delta$, setting $\eta = \tfrac{\delta}{6L}$ gives
    \begin{align*}
        f(X_t) - f^* 
        &< \Big(1 - \tfrac{\delta}{6L} + \tfrac{\delta^2}{18L\mu}\Big)(f(X_{t-1}) - f^*) 
        < \Big(1 - \tfrac{\delta}{12L}\Big)(f(X_{t-1}) - f^*),
    \end{align*}
    where the last inequality uses $\mu > \tfrac{2}{3}\delta$.
 Combining the two cases yields our desired linear convergence inequality.
% 
%    Therefore, for all $t \geq \max\{T_0, T_1\}$,
%    \[
%        f(X_t) - f^* 
%        \leq \left(1 - \min\left\{\tfrac{\mu}{8L}, \tfrac{\delta}{12L} \right\}
%        \right)(f(X_{t-1}) - f^*),
%    \]
%    which completes the proof.
\end{proof}

\begin{remark} \label{Remark-SmallSDP}
Note that the subproblem in Line~6 of Algorithm~\ref{Alg: SO-FW-FC} 
is semidefinite program (SDP) in disguise as explained below.
By introducing the substitutions $\tileta = 1 - \eta$ and $\tilS = \eta S$, it can be equivalently reformulated as 
\begin{equation}\label{Eq: SOFC-FW-subproblem} 
	\min_{\tileta + \tr(\tilS) = 1,\; \tileta \ge 0,\; \tilS \succeq 0} f(\tileta X_{t-1} + V_t \tilS V_t^T). 
\end{equation} 
The above problem is a convex SDP and can be efficiently solved using projected gradient descent (PGD) or its accelerated variant (APGD). Moreover, projecting onto the feasible set 
\[ \{(\tileta, \tilS) \mid \tileta + \tr(\tilS) = 1,\, \tileta \ge 0,\, \tilS \succeq 0\} 
\] 
requires only computing the eigenvalue decomposition of an $\widehat{r}_t$-dimensional symmetric matrix and performing a projection onto the $(\widehat{r}_t + 1)$-dimensional simplex (the one more
dimension is due to the variable $\tileta$). The total computational cost of this step is therefore $O((r^*)^3)$. Detailed procedures and correctness of this projection step can be found in~\cite[Lemma~3.1]{allen2017linear} and~\cite[Lemma~6]{garber2021convergence}.
\end{remark}

%%%%-----------------------------------------------------
\section{Experiments}\label{section-numerical-tests}

In this section, we evaluate the empirical performance of the proposed algorithm on three representative low-rank optimization problems. 
We begin with \emph{quadratic sensing}, a standard benchmark in low-rank matrix recovery that has been widely adopted in prior numerical studies \cite{ding2020spectral,garber2023linear,garber2025linearly}. 
This experiment serves as the primary testbed for validating the main theoretical properties of our method, including its linear convergence behavior and rank adaptivity. 
We then consider two additional applications that have also been commonly used in related work, namely matrix completion and polynomial neural network training \cite{allen2017linear}.
Although these problems are originally formulated with nuclear norm ball constraints, they can both be equivalently reformulated as optimization problems over the spectrahedron~\cite{jaggi2010simple}. 
This unified formulation allows us to evaluate the proposed method under a common optimization framework while demonstrating its applicability beyond the quadratic sensing setting.

The remainder of this section is organized as follows. 
We first describe the shared implementation details and experimental setup. 
We then present numerical results on quadratic sensing, followed by additional experiments on matrix completion and polynomial neural networks.

\subsection{Experimental setup}

All experiments were run in MATLAB R2022b.
To avoid repeated first-order oracles when solving~\eqref{Eq: SOFC-FW-subproblem}, we minimize the $L$-smooth quadratic upper model of $f$ at $X_{t-1}$,
\begin{equation}\label{Eq: SOFC-FW-upper-subproblem}
\begin{aligned}
    \min_{\tileta + \tr(\tilS) = 1,\; \tileta \ge 0,\; \tilS \succeq 0}
    \phi (\tileta, \tilS):=& f(X_{t-1})+\langle\tileta X_{t-1}+V_t\tilS V_t^T - X_{t-1},\nabla f(X_{t-1}) \rangle \\
    +& \frac{L}{2}\lVert X_{t-1}-(\tileta X_{t-1}+V_t\tilS V_t^T) \rVert_F^2 ,
\end{aligned}
\end{equation}
using FASTA~\cite{goldstein2014field,goldstein2015fasta}; the gradient of $\phi$ is assembled from $\nabla f(X_{t-1})$ and $V_t$ only.
We compare classical FW with line search~\cite{frank1956algorithm}, Block-FW~\cite{allen2017linear} under several block sizes $k$, and the proposed S-SFW (Algorithm~\ref{Alg: SO-FW-FC}).
ERSO-FW is not reported separately, as it uses the same efficient-rank mechanism but only the sublinear guarantee of Theorem~\ref{Thm-sublinear-SOFW}.
All methods use the same $L$ and $\mu$ whenever these parameters are required.
We report the Frank--Wolfe gap $\mathrm{Gap}(X_t):=\max_{Y\in\spec_n}\langle\nabla f(X_t),X_t-Y\rangle$ (an upper bound on $f(X_t)-f^*$), the working gap $f(X_t)-B_t$ for S-SFW, and the objective gap $f(X_t)-f^*$, where $f^*$ is computed by FASTA to tolerance $10^{-10}$.

\subsection{Quadratic sensing}
In this subsection, we evaluate the empirical performance of the proposed method on synthetic instances of the quadratic sensing problem~\cite{chen2015exact}. 

\paragraph{Problem formulation}
The goal is to recover a low-rank ground truth matrix in the spectrahedron from random quadratic measurements. 
Our experimental setup closely follows those adopted in prior works~\cite{ding2020spectral,garber2023linear,garber2025linearly}, enabling a direct and fair comparison.
Specifically, we consider the following optimization problem:
\begin{equation}
    \min_{X\in\spec_n}\left\{f(X):=\frac{1}{2}\sum_{i=1}^m (\tau \bfa_i^TX\bfa_i-\bfb_i)^2 \right\}.
\end{equation}
The ground truth matrix is generated as $X_{\natural} = U_{\natural} U_{\natural}^{\top} \in \spec_n, $ where $ U_{\natural} \in \mathbb{R}^{n \times r^*} $ has i.i.d.\ standard Gaussian entries and is subsequently normalized to ensure $\tr(X_{\natural}) = \|U_{\natural}\|_F^2 = 1. $
This construction yields a rank-$r^*$ positive semidefinite matrix lying on the boundary of the spectrahedron.
The sensing vectors $\bfa_i \in \mathbb{R}^n,\ i=1,\dots,m$ are independently sampled from the standard Gaussian distribution.
The observation vector is given by $\bfb = \bfb_{\natural} + \bfb_{\text{noise}},$ where $b_{\natural}(i) = \bfa_i^{\top} X_{\natural} \bfa_i$ and the noise term is defined as $\bfb_{\text{noise}}=\frac{\|\bfb_{\natural}\|_2}{2}\,\bfv,$ with $\bfv \in \mathbb{R}^m$ being a random unit vector.
This choice yields a moderate noise level relative to the signal magnitude.
Finally, to ensure that the strict complementarity condition holds and to mitigate over-fitting effects, we fix the scaling parameter as $ \tau = 0.5, $ which is consistent with the settings used in \cite{ding2020spectral,garber2023linear,garber2025linearly}. 

In all experiments, we set the smoothness parameter to $L = 0.5\, n^2$ and the quadratic growth parameter to $\mu = 0.4\, n^2$ for all methods, following the same choices adopted in
\cite{ding2020spectral,garber2025linearly}. 

\paragraph{Rank-Adaptive Behavior and Linear Convergence}
We begin by comparing S-SFW (Algorithm~\ref{Alg: SO-FW-FC}) with the standard Frank--Wolfe method equipped with line search, while simultaneously tracking the evolution of the efficient rank $\widehat{r}_t$ along the iterations.
As shown in Figure~\ref{fig: gap_compare}, the standard Frank--Wolfe method exhibits a sublinear convergence behavior, whereas the proposed algorithm enters a clear linear convergence regime after a short burn-in phase.

Moreover, the efficient rank starts from $\widehat{r}_0 = 0$ and grows rapidly during the initial iterations.
We note in passing that $\widehat{r}_t = 0$ yields $\widehat{d}_{t-1} = 1/n$ in \eqref{Yhat-V} and hence $\widehat{Y}_t = \bfv_n \bfv_n^T$, i.e., the method automatically performs a classical rank-one Frank--Wolfe step; this is precisely what happens over the initial plateau of each rank curve in Figure~\ref{fig: gap_compare}.
In accordance with Remark~\ref{Remark-EfficientRank}(ii), $\widehat{r}_t$ is not monotone before it settles: all three instances exhibit temporary drops, and for $r^*=10$ it even returns to $0$ once and then stagnates at $9$ for about sixty iterations before reaching $10$.
In all tested instances, $\widehat{r}_t$ locks at the ground-truth rank $r^*$ after roughly $13$, $59$ and $124$ iterations for $r^* = 3, 6$ and $10$, respectively.
These numbers are one to two orders of magnitude smaller than the theoretical bound $T_0' = T_0 + r^*$ of Theorem~\ref{Thm-rank-adaptive}(iii): since $\tr(X^*)=1$ and $\mbox{rank}(X^*) = r^*$ imply $\lambda_{r^*} \le 1/r^*$, the present choice $L = 0.5\,n^2$ and $\mu = 0.4\, n^2$ gives
\[
  T_0 = \frac{72L}{\mu \lambda_{r^*}^2} = \frac{90}{\lambda_{r^*}^2} \ge 90 (r^*)^2 ,
\]
that is, $T_0' \ge 813,\ 3246$ and $9010$ for $r^* = 3, 6$ and $10$.
Throughout the iterations, $\widehat{r}_t$ never exceeds the intrinsic rank $r^*$ of the optimal solution, as guaranteed by Theorem~\ref{Thm-rank-adaptive}(i).

Notably, in all three panels the onset of linear convergence coincides with the identification of the correct rank: the FW gap begins its steep decay at the very iteration where $\widehat{r}_t$ reaches $r^*$ (marked by the dashed vertical line).
This empirical observation is fully consistent with Theorem~\ref{Thm-SSFW-linear}, which predicts linear convergence after the correct active rank has been identified.

\begin{figure}[!ht]
  \centering
  \begin{minipage}[t]{0.32\textwidth}
    \centering
    \includegraphics[width=\linewidth]{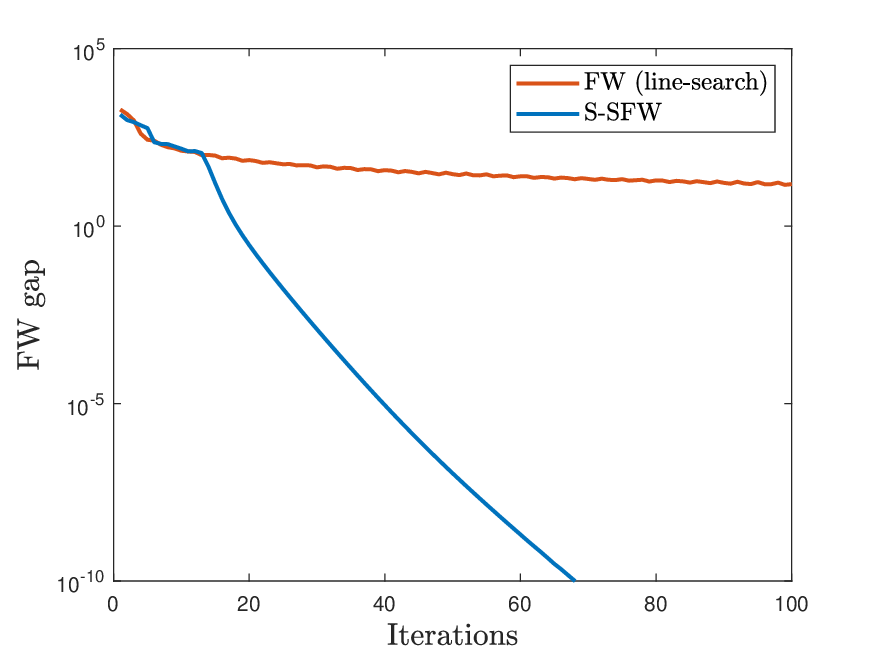}\\
    \includegraphics[width=\linewidth]{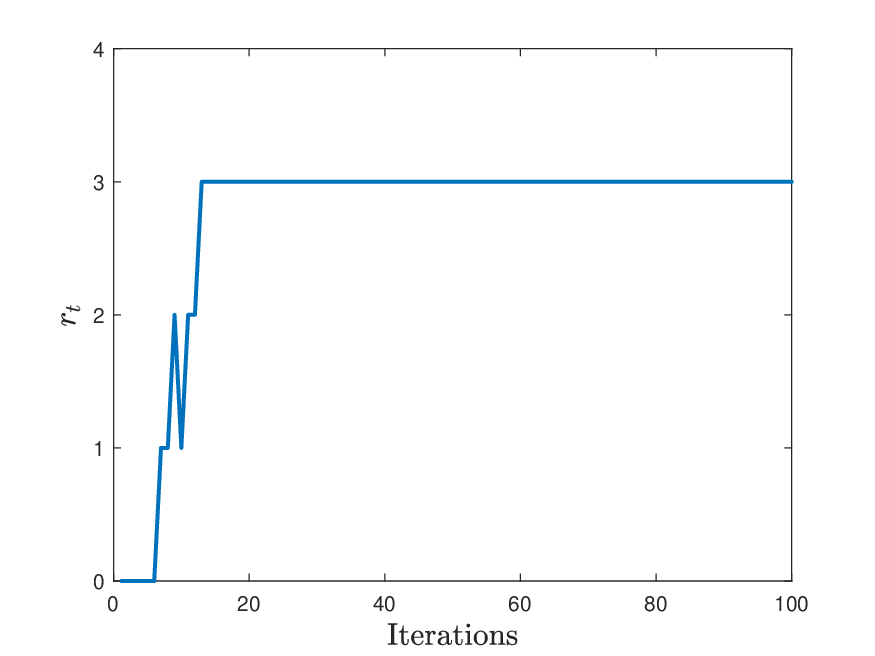}\\
    {\footnotesize (a) $r^*=3$}
  \end{minipage}\hfill
  \begin{minipage}[t]{0.32\textwidth}
    \centering
    \includegraphics[width=\linewidth]{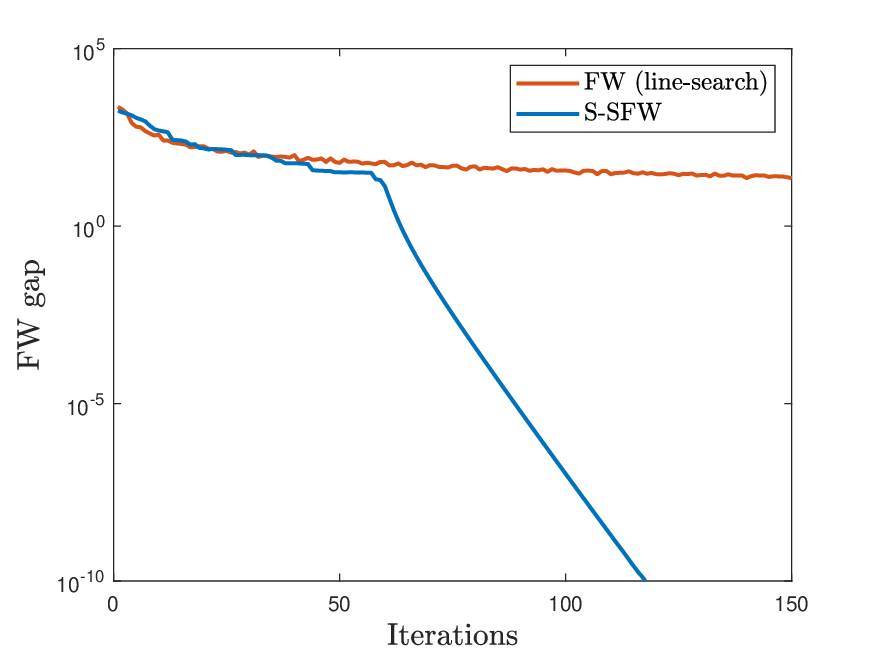}\\
    \includegraphics[width=\linewidth]{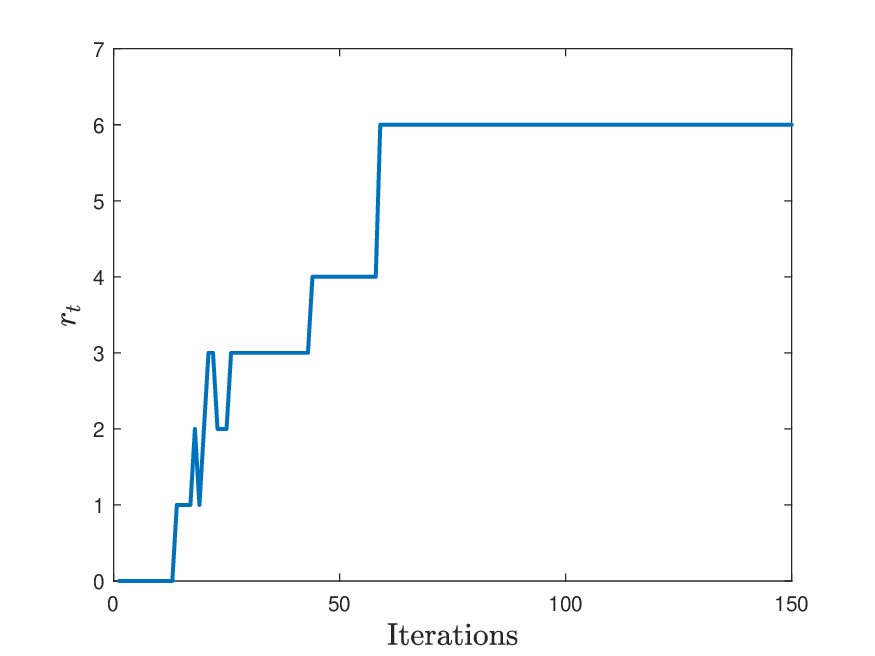}\\
    {\footnotesize (b) $r^*=6$}
  \end{minipage}\hfill
  \begin{minipage}[t]{0.32\textwidth}
    \centering
    \includegraphics[width=\linewidth]{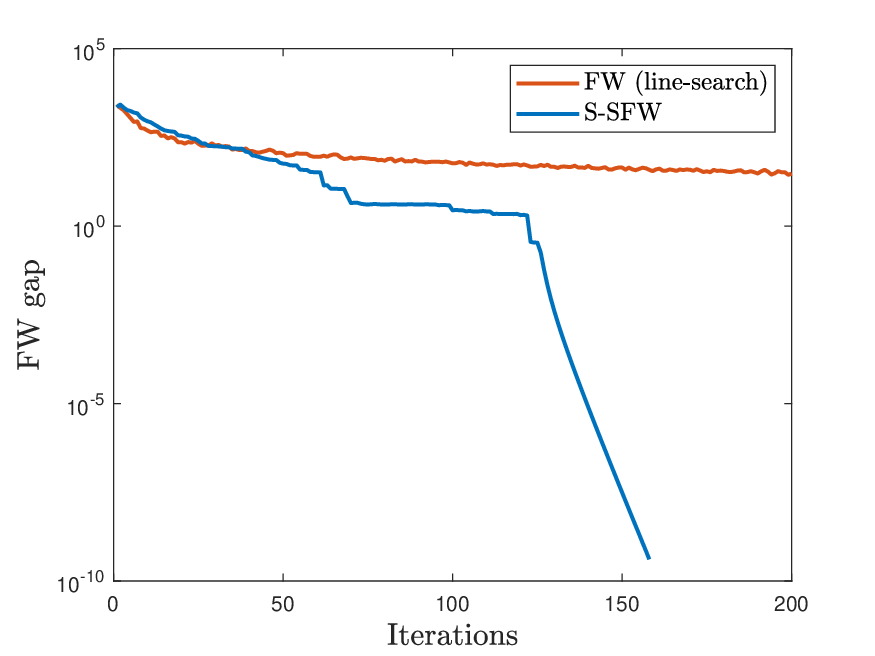}\\
    \includegraphics[width=\linewidth]{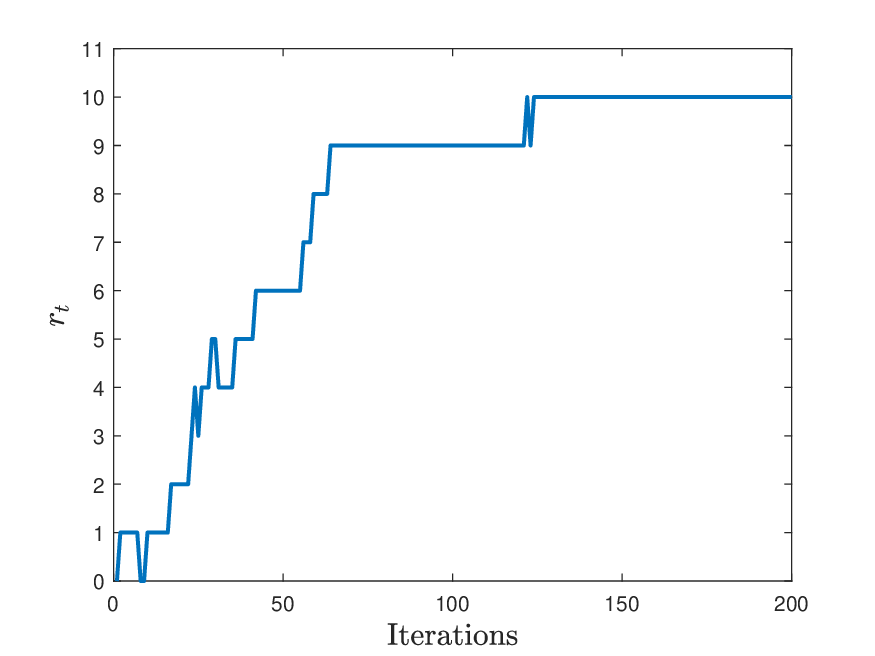}\\
    {\footnotesize (c) $r^*=10$}
  \end{minipage}
  \caption{Rank adaptivity and linear convergence for $r^*\in\{3,6,10\}$ ($n=100$, $m=15nr^*$).
  Top: FW gap of FW and S-SFW; bottom: efficient rank $\widehat{r}_t$. The dashed line marks the iteration at which $\widehat{r}_t$ locks at $r^*$.}
  \label{fig: gap_compare}
\end{figure}

\paragraph{Comparison to Frank--Wolfe Variants}
We conduct a series of numerical experiments to compare the proposed method with several Frank--Wolfe-type algorithms.
In addition to the standard Frank--Wolfe method with line search and our algorithm, we also include the Block Frank-Wolfe (Block-FW) method~\cite{allen2017linear} under two different choices of the block size parameter:
$k = r^*$ (denoted as \emph{Block-FW ($k = r^*$)} in the figures) and
$k = r^* - 1$ (denoted as \emph{Block-FW ($k = r^* - 1$)}).
For a fair comparison, the Block-FW method uses the same hyperparameters $L$ and $\mu$ as those adopted in S-SFW.
We evaluate all four algorithms on problem instances with dimensions $n\in\{100, 200, 400\}$ and ground-truth ranks $r^* \in\{ 3, 6\}$.
The corresponding results are reported in Figure~\ref{fig:fwgap_compare_grid}. 

\begin{figure}[!ht]
    \centering
    \begin{minipage}[t]{0.32\textwidth}
        \centering
        \includegraphics[width=\linewidth]{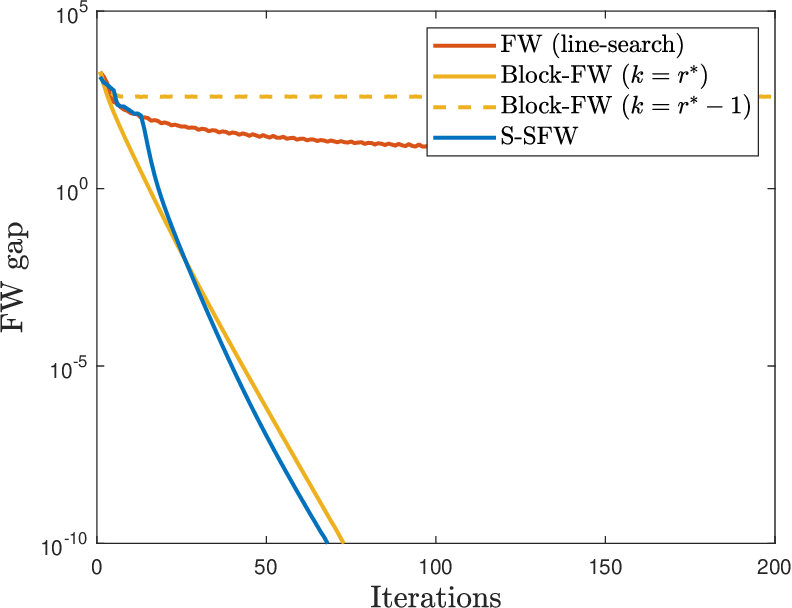}
        {\footnotesize (a) $n=100,\ r^*=3$}
    \end{minipage}\hfill
    \begin{minipage}[t]{0.32\textwidth}
        \centering
        \includegraphics[width=\linewidth]{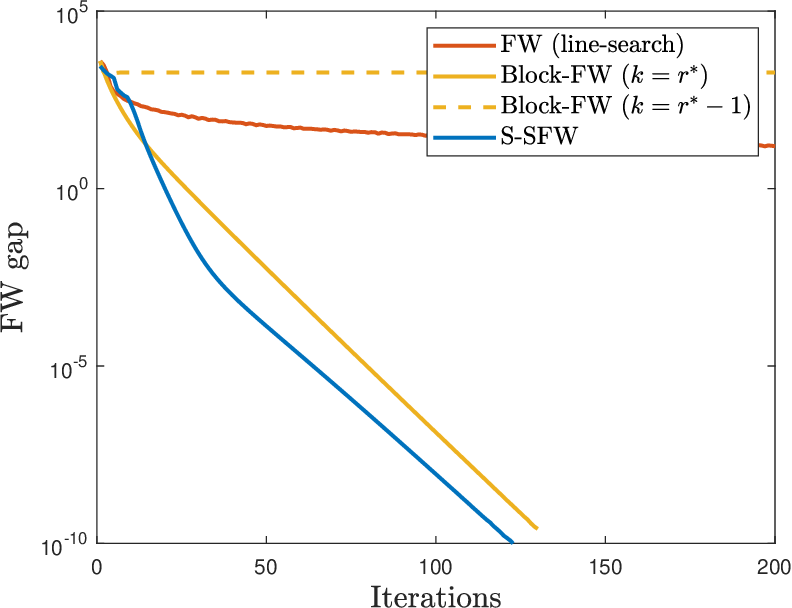}
        {\footnotesize (b) $n=200,\ r^*=3$}
    \end{minipage}\hfill
    \begin{minipage}[t]{0.32\textwidth}
        \centering
        \includegraphics[width=\linewidth]{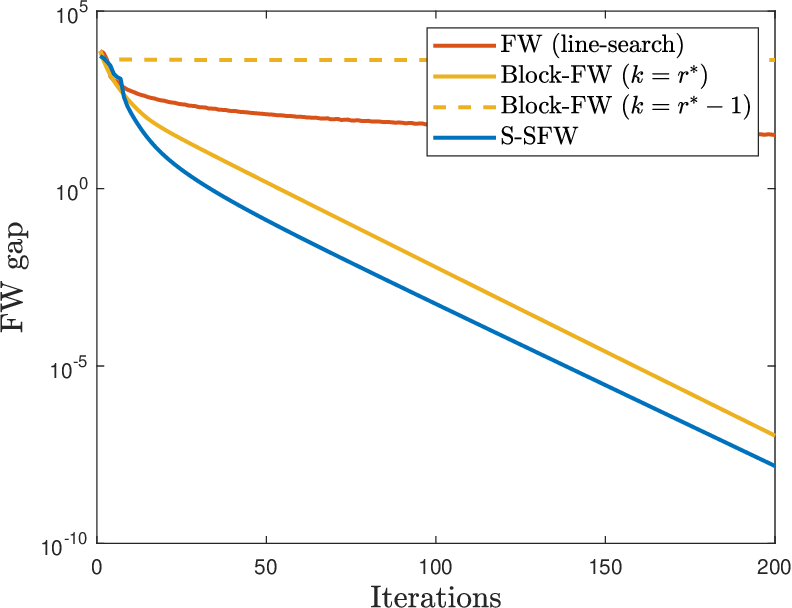}
        {\footnotesize (c) $n=400,\ r^*=3$}
    \end{minipage}

    \vspace{1em}

    \begin{minipage}[t]{0.32\textwidth}
        \centering
        \includegraphics[width=\linewidth]{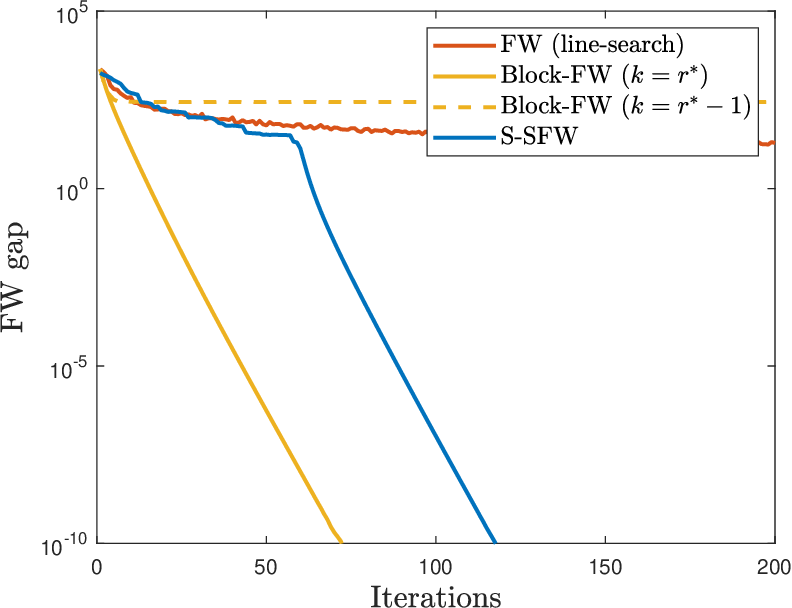}
        {\footnotesize (d) $n=100,\ r^*=6$}
    \end{minipage}\hfill
    \begin{minipage}[t]{0.32\textwidth}
        \centering
        \includegraphics[width=\linewidth]{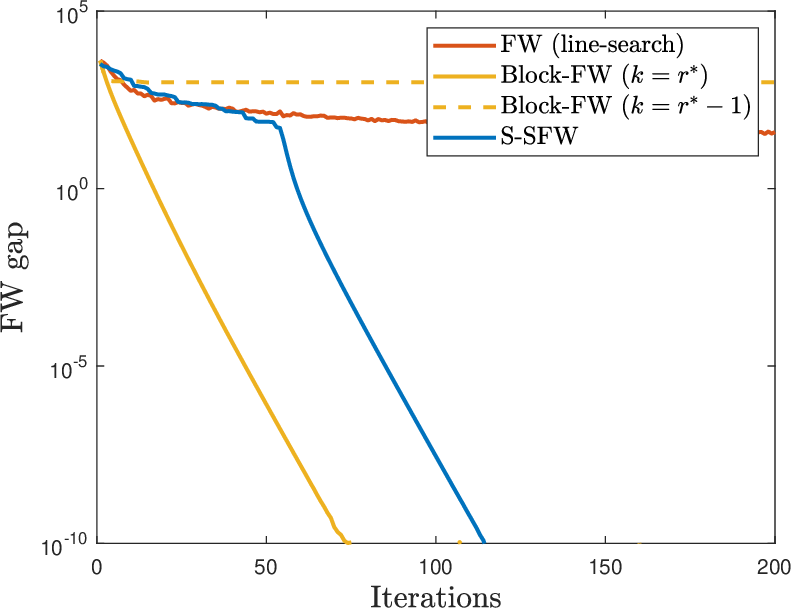}
        {\footnotesize (e) $n=200,\ r^*=6$}
    \end{minipage}\hfill
    \begin{minipage}[t]{0.32\textwidth}
        \centering
        \includegraphics[width=\linewidth]{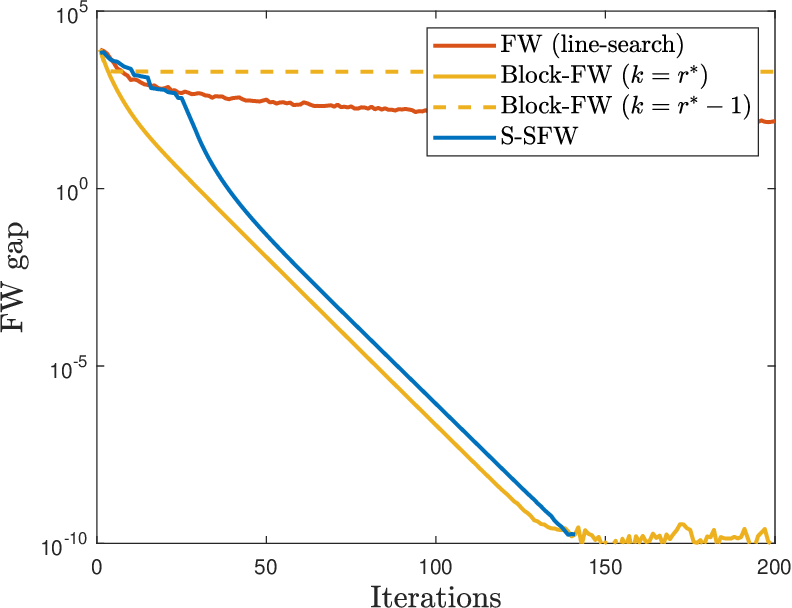}
        {\footnotesize (f) $n=400,\ r^*=6$}
    \end{minipage}
    \caption{FW gap versus iterations for different $n$ and $r^*$.}
    \label{fig:fwgap_compare_grid}
\end{figure}
Overall, the proposed algorithm demonstrates the best performance in most experimental settings.
Notably, although Block-FW with $k = r^*$ also converges rapidly, a slight misspecification of the block size (i.e., setting $k = r^* - 1$) can lead to a dramatic deterioration in performance and fails to converge in all tested instances.
This highlights the sensitivity of Block-FW to its hyperparameter choice and, in contrast, illustrates the advantage of the proposed method in automatically adapting to the intrinsic rank of the solution.

Finally, we observe that in the cases $(n, r^*) = (100, 6)$ and $(200, 6)$, the proposed method exhibits a relatively long sublinear convergence phase before entering the linear regime.
This behavior can be attributed to the use of a fixed smoothness parameter $L = 0.5 n^2$, which underestimates the actual smoothness constant of the generated instances in these settings.

\subsection{Additional applications}
We next test S-SFW on two nuclear-norm constrained problems that lift to~\eqref{Eq: Problem}: matrix completion and the training of polynomial neural networks.
\paragraph{Matrix Completion}
Suppose there is an unknown matrix $M\in \mathbb{R}^{n_1\times n_2}$ that is approximately low-rank, and only a subset $\Omega$ of its entries is observed; namely, we observe $M_{i,j}$ for every $(i,j)\in\Omega$. 
A typical example is collaborative filtering, where $M_{i,j}$ represents the rating given by user $i$ to movie $j$. 
A common convex relaxation for recovering $M$ is the following nuclear-norm constrained least-squares problem:
\begin{equation}\label{Eq-matrix-problem}
\min _{X \in \mathbb{R}^{n_1 \times n_2}}
\left\{
\left.
\frac{1}{2} \sum_{(i, j) \in \Omega}
\left(\tau X_{i, j}-M_{i, j}\right)^2
\ \right|\ 
\|X\|_* \leq 1
\right\}.
\end{equation}
Following~\cite{jaggi2010simple}, problem~\eqref{Eq-matrix-problem} is brought into the form~\eqref{Eq: Problem} by lifting $X$ to the off-diagonal block of a symmetric matrix in $\spec_{n_1+n_2}$, so that the ambient dimension of the lifted problem is $n = n_1 + n_2$.

\begin{figure}[!ht]
  \centering
  \begin{minipage}[t]{0.48\textwidth}
    \centering
    \includegraphics[width=\linewidth]{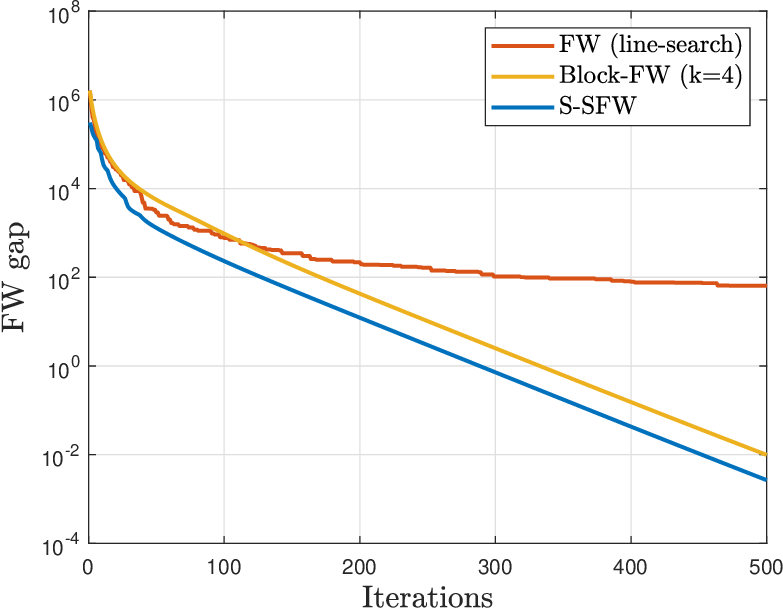}\\
    \includegraphics[width=\linewidth]{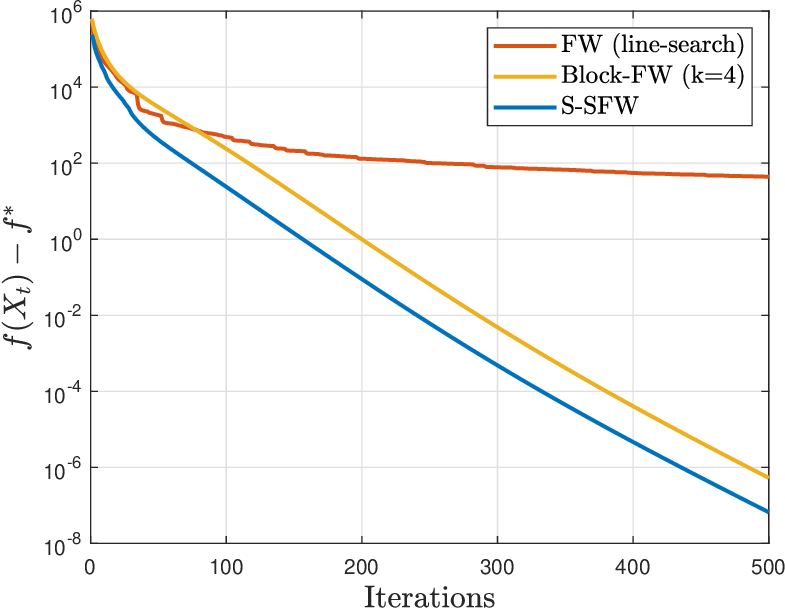}\\
    {\footnotesize (a) $\tau=2500$}
  \end{minipage}\hfill
  \begin{minipage}[t]{0.48\textwidth}
    \centering
    \includegraphics[width=\linewidth]{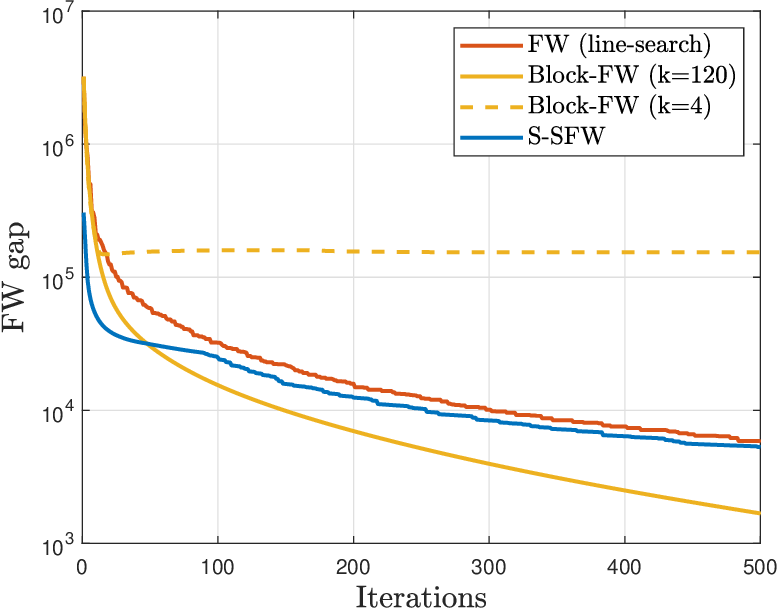}\\
    \includegraphics[width=\linewidth]{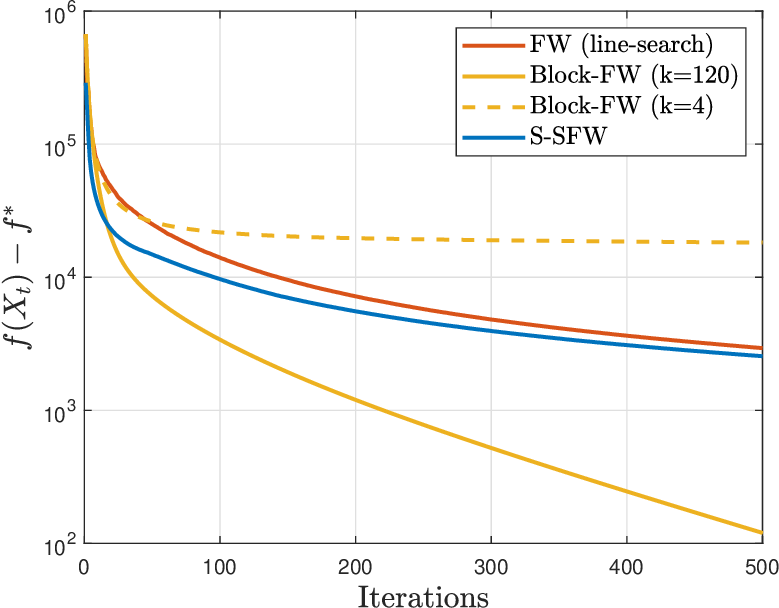}\\
    {\footnotesize (b) $\tau=5000$}
  \end{minipage}
  \caption{\textsc{MovieLens100k}: FW gap (top) and objective gap (bottom) for $\tau=2500$ (a) and $\tau=5000$ (b).}
  \label{fig: matrix_compare}
\end{figure}

We consider the same matrix completion experiment as in \cite{garber2016faster,allen2017linear} on the \textsc{MovieLens100k} dataset, where $n_1=943$, $n_2=1682$, and $|\Omega|=10^5$, so that the lifted problem has ambient dimension $n = n_1+n_2 = 2625$. 
We compare FW, Block-FW, and S-SFW for different choices of $\tau$. 
For all methods, we set $L=\tau^2$ and $\mu=0.3L$. 
For Block-FW, we choose the block size $k=4$ when $\tau=2500$, and test both $k=4$ and $k=120$ when $\tau=5000$, motivated by the fact that the optimal solution has rank $r^*=3$ for $\tau=2500$ and $r^*=117$ for $\tau=5000$.

The numerical results are reported in Fig.~\ref{fig: matrix_compare}. 
When $\tau$ is relatively small, S-SFW consistently outperforms Block-FW in terms of both the FW gap and objective optimality gap. 
When $\tau$ becomes larger, Block-FW with a sufficiently large block size ($k=120$) achieves faster convergence than S-SFW. 
However, the performance of Block-FW is highly sensitive to the choice of $k$: in particular, the variant with $k=4$ fails to converge for $\tau=5000$. 
This indicates that while Block-FW can be competitive when an appropriate block size is carefully tuned, S-SFW is substantially more robust across different parameter regimes.

\paragraph{Polynomial Neural Networks}

Following the polynomial network benchmark introduced in
\cite{allen2017linear}, we consider the training of two-layer polynomial
networks with quadratic activation $\sigma(a)=a^2$.
For input dimension $d$ and $p$ hidden neurons, the model class can be written as
\[
    P_p=
    \left\{
        \bfx\mapsto
        \sum_{j=1}^{p}
        a_j(\bfw_j^\top\bfx)^2
        \;\middle|\;
        \bfw_j\in\mathbb{R}^d,\ 
        \|\bfw_j\|_2=1,\ 
        \bfa\in\mathbb{R}^p
    \right\}.
\]

Introducing $A=\sum_{j=1}^{p}a_j\bfw_j\bfw_j^\top$, this model admits the equivalent representation
\[
    P_p=
    \left\{
        \bfx\mapsto\bfx^\top A\bfx
        \;\middle|\;
        A\in\mathbb{R}^{d\times d},\
        \mbox{rank}(A)\le p
    \right\}.
\]

Replacing the hard rank constraint by the trace-norm constraint 
$\|A\|_*\le\tau$, and rescaling $A$ by $\tau$, the problem over training data $\{(\bfx_1,y_1),\dots,(\bfx_N,y_N)\}\subset\mathbb{R}^d\times\mathbb{R}$
can be formulated as
\begin{equation}\label{Eq-neural-problem}
    \min_{\|A\|_*\le 1}
    \left\{
        \frac12
        \sum_{i=1}^{N}
        \bigl(
            \tau\,\bfx_i^\top A\bfx_i-y_i
        \bigr)^2
    \right\}.
\end{equation}

We apply model~\eqref{Eq-neural-problem} to train a binary classifier on
the \textsc{Mnist} handwritten digit dataset, distinguishing digit ``0'' from all
other digits.
The original \textsc{Mnist} training set contains $60{,}000$ images; due to memory
limitations, we use the first $N=20{,}000$ training examples in our
experiments.\footnote{This differs from \cite{allen2017linear}, which uses the full training set.}
Each image has dimension $d=28\times 28=784$, so that the same lifting as above yields an ambient dimension $n = 2d = 1568$.
We set $y_i=1$ for digit ``0'' and $y_i=0$ otherwise, and normalize pixel
values to $[0,1]$ by dividing grayscale intensities by $255$.

\begin{figure}[!ht]
  \centering
  \begin{minipage}[t]{0.48\textwidth}
    \centering
    \includegraphics[width=\linewidth]{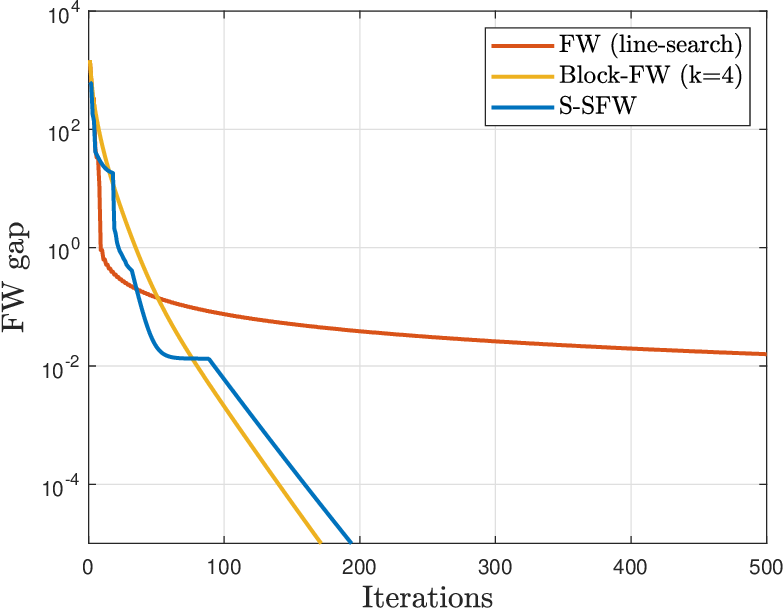}\\
    \includegraphics[width=\linewidth]{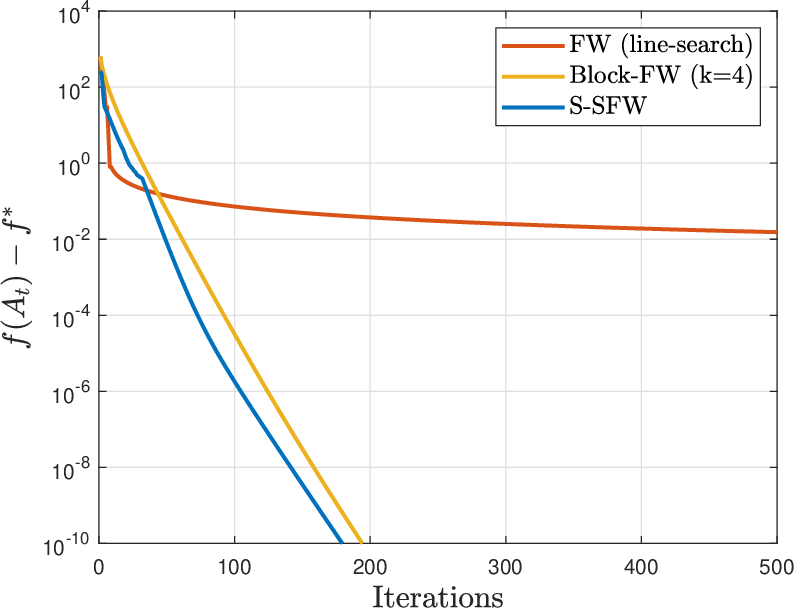}\\
    {\footnotesize (a) $\tau=0.01$}
  \end{minipage}\hfill
  \begin{minipage}[t]{0.48\textwidth}
    \centering
    \includegraphics[width=\linewidth]{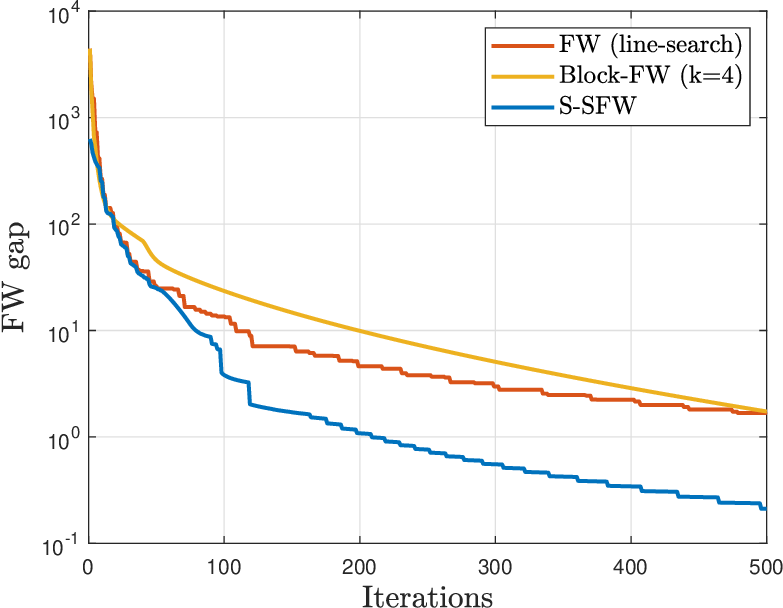}\\
    \includegraphics[width=\linewidth]{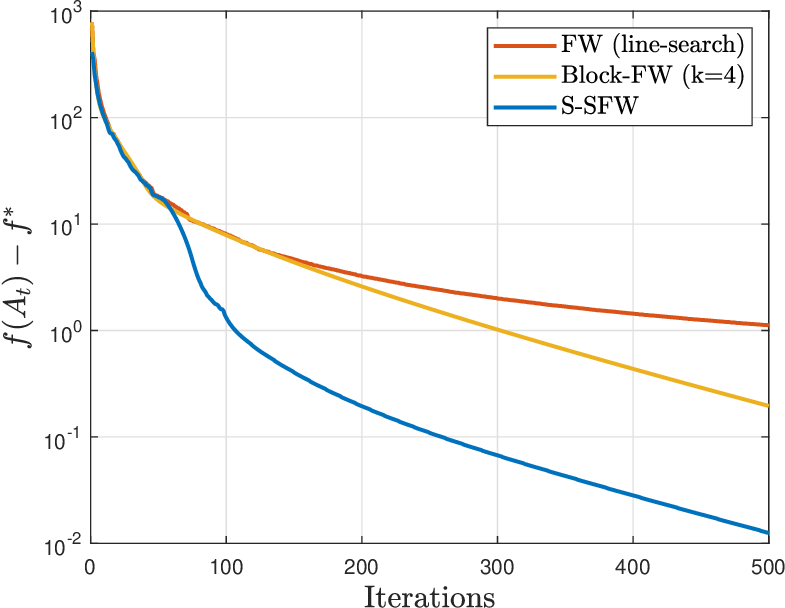}\\
    {\footnotesize (b) $\tau=0.03$}
  \end{minipage}
  \caption{\textsc{Mnist}: FW gap (top) and objective gap (bottom) for $\tau=0.01$ (a) and $\tau=0.03$ (b).}
  \label{fig: neural_compare}
\end{figure}

We evaluate two choices of $\tau$ and compare S-SFW against FW and
Block-FW.
Our implementation follows \cite{allen2017linear} at the benchmark level,
while using different training subset sizes and parameter choices.
For all methods, we set $L=2\tau^2Nd, \mu=0.4L$. 
For Block-FW, we fix the block size to $k=4$ in both settings, which matches the larger of the two optimal ranks (specifically, $r^*=2$ for $\tau=0.01$ and $r^*=4$ for $\tau=0.03$).
The experimental results are reported in Fig.~\ref{fig: neural_compare}. 
As shown in the figure, S-SFW consistently outperforms both FW and Block-FW across the two choices of $\tau$, demonstrating faster convergence in practice.

\section{Conclusions}\label{sec:conclusions}
This paper advances Frank--Wolfe methods for convex optimization over the spectrahedron by introducing a nonconvex Spectral Oracle based on spectral balls, which lift the simplex-ball geometry of~\cite{wang2025simplex} to the matrix setting.
Despite the nonconvexity of spectral balls, we establish structural properties that yield a closed-form oracle and an efficient implementation of cost comparable to standard Frank--Wolfe iterations when the optimal rank satisfies $r^*\ll n$.
Building on this oracle, we develop a rank-adaptive Frank--Wolfe scheme that requires no prior knowledge of $r^*$ and, after incorporating a fully corrective low-dimensional update, attains linear convergence after a finite burn-in phase under quadratic growth and strict complementarity.
In particular, the eigen-component count never exceeds $r^*$ and eventually stabilizes at $r^*$, thereby closing the gap between low-rank efficiency and fast convergence for spectrahedron-constrained Frank--Wolfe methods.
To our best knowledge, it is a first block-FW method that has block size between $1$ and $r^*$ to enjoy a
linear convergence rate.
Numerical experiments on quadratic sensing, matrix completion, and polynomial neural network training corroborate the theory and demonstrate competitive, robust performance relative to classical Frank--Wolfe and block variants.

\section*{Acknowledgments}
This work was supported by the National Natural Science Foundation of China (No.~12571323) and Hong Kong RGC General Research Fund (No.~PolyU/15303124).

\bibliographystyle{plainnat}
\bibliography{references}
\end{document}